\newtheorem{Lem}{Lemma}
\newtheorem{Prop}{Proposition}
\newtheorem{Rem}{Remark}
\newcommand{\p}{\partial}
\renewcommand{\dfrac}[2]{\frac{#1}{#2}}
\newcommand{\demi}{\frac{1}{2}}
\title{Asymptotic preserving schemes for Hyperbolic Systems with Relaxation}
\date{\today}
\author{C. Mahmoud, H. Mathis}
\address{Institut Montpelliérain Alexander Grothendieck, ANGUS, Université de
  Montpellier, Inria, CNRS, Montpellier, France}
\email{christina.mahmoud@umontpellier.fr , helene.mathis@umontpellier.fr}
\begin{document}

\begin{abstract}
  This paper presents the construction of two numerical schemes for the
  solution of hyperbolic systems with relaxation source terms. The
  methods are built by considering
  the relaxation system as a whole, without separating the resolution
  of the convective part from that of the source term.
  The first scheme combines the centered FORCE approach of Toro and
  co-authors with the
  unsplit strategy proposed by Béreux and Sainsaulieu.
  The second scheme consists of an approximate Riemann solver which
  carefully handles the source term approximation.
  The two
  schemes are built to be asymptotic preserving, in the sense that
  their limit schemes are consistent with the equilibrium model as the relaxation
  parameter tends to zero, without any CFL restriction. For specific
  models, it is possible to prove that they preserve invariant domains
  and admit a discrete entropy inequality.
\end{abstract}

\maketitle

\noindent
\textbf{Key-words.}  Hyperbolic systems with relaxation, asymptotic
preserving schemes, approximate Riemann solver

\noindent
\textbf{2020 MCS.} 35L40, 65M08, 65M12

\tableofcontents

\section{Introduction}

We are interested in the numerical approximation of
hyperbolic systems with relaxation. Such systems are a class of partial differential
equations that model multiscale phenomena and where nonlinear hyperbolic
convection interacts with relaxation mechanisms.
These mechanisms are modeled by nonlinear relaxation terms
involving relaxation parameter, denoted $\varepsilon$ in the sequel.
As this scaling parameter tends to zero, solutions are driven towards equilibrium solutions.
Contrary to dynamical systems, if the initial Cauchy data of the
relaxation model belongs to the equilibrium manifold, then the
solution could be out of equilibrium \cite{dafermosBook}.
In order to study the stability and the convergence of solutions to
hyperbolic systems with relaxation towards their equilibrium
hyperbolic models, several criteria have been established
\cite{Liu87, chen94:relax, Bouchut05}. The strongest criterion relies on the Lax entropy
structure of the hierarchy of models: if the relaxed system is endowed
with an entropy-flux pair, which dissipates the source term, then the
restriction of the pair to the
equilibrium manifold is an entropy-flux pair for the equilibrium
model. The models we will consider for applications fall into this category.

The literature on numerical schemes for hyperbolic systems with
relaxation is extensive.
This is largely due to the fact that relaxation techniques were
originally introduced as a means to develop robust schemes for
homogeneous hyperbolic systems. A seminal contribution in this area is
the work of Jin and Xin \cite{jin1995relaxation}, who proposed a relaxation-based approximation
for systems of conservation laws.
A fundamental and robust strategy \cite{JinAP}, known as the splitting method,
involves decoupling the convective part—handled via a numerical flux
(such as HLL)—from the source term, which is treated implicitly to
ensure the correct asymptotic behavior as $\varepsilon \to 0$.
In the case of the Jin–Xin model, the resulting scheme is uniformly
convergent with respect to both the relaxation parameter $\varepsilon$
and the discretization parameters \cite{LS01,FR13}.

Several other methods build on this foundational approach and have
shown excellent performance for kinetic models. In particular, IMEX
(Implicit–Explicit) schemes have become widely recognized for their
effectiveness in handling stiff source terms.
These methods treat the non-stiff hyperbolic fluxes explicitly and the stiff relaxation source terms implicitly, enabling stable time integration without resolving the fast time scales.
IMEX schemes are designed to be \textit{asymptotic-preserving} (AP),
meaning they remain stable and consistent as $\varepsilon \to 0$, and
in many cases, they  preserve the correct order of accuracy in the
limiting regime.
These properties have been rigorously analyzed in several works,
including the unified framework presented by Boscarino, Pareschi, and
Russo~\cite{boscarino2017imex}, the uniform stability and accuracy
results for linear systems by Hu and Shu~\cite{hu2023uniform} and Ma
and Huang~\cite{ma2024uniform}, as well as the comprehensive review of
AP methods for quasilinear hyperbolic systems provided by Boscarino
and Russo~\cite{boscarino2024ap}.

The numerical schemes we propose here differ from the original IMEX
approach, even if they rely on explicit/implicit treatments.
In fact, they can be viewed as two extensions of the original
scheme proposed
in
\cite{bereux1996zero} and \cite{bereux1997roe}.
Those papers present a staggered-grid scheme 
with three steps: first, a shift followed by solving the source-term
ODEs; second, the implicit computation of the average
solution over a half time step; and finally, the repetition of the
two first steps
to estimate the solution at time $n+1$.
In the original works, the numerical fluxes are based on Roe-type
approximation.
The method differs from standard operator-splitting strategies because
it is built directly on the fully coupled relaxation system. In
particular, an approximate solution of the source term, computed in
the first step, is used as an input to the numerical scheme.

Here are designed two possible first-order adaptations of this method.

First we propose replacing the Roe-type approximation with the \textit{First-Order Centered} FORCE scheme,
one of whose earliest references is \cite{toro1992force}.
FORCE is defined as the average of the Lax–Friedrichs and Richtmyer
schemes, aiming to combine the stability of the former
with the improved resolution of the latter. Unlike classical upwind
methods, FORCE avoids solving Riemann problems while still preserving
the conservative structure of the equations.We focus here on the
first-order method, but it has later been extended to second-order
accuracy within the MUSCL framework via nonlinear slope
limiters, thereby enforcing the total variation diminishing
   property \cite{toro2009riemann}. In addition, Chen and Toro
\cite{chen2003centred} proved that the FORCE scheme satisfies a
\textit{fully discrete entropy inequality}, ensuring convergence
toward the physically admissible entropy solution.
The resulting method has desirable properties: it is
asymptotic preserving, and at equilibrium, the limit scheme corresponds
to a FORCE scheme applied to the equilibrium model.

Second, we incorporate the solution of the source term into the
definition of an approximate Riemann solver, in
the spirit of \cite{berthon2016well}. In that work, the authors proposed an
explicit approximate Riemann solver initially designed to preserve the
stationary states of a convection-diffusion model. The scheme is
based on the integral consistency relation with the solution of the
Riemann problem. For a conservative equation, it is possible to
determine the exact average solution. However, in the presence of a
source term, this calculation becomes complex, and the technique
proposed in \cite{berthon2016well} allows to take into account the
influence of the source term in the definition of the Riemann solver.
Our second numerical scheme combines  this technique for defining the Riemann solver with the implicit
computation of the source term proposed in \cite{bereux1996zero} and \cite{bereux1997roe}.
The overall method is asymptotic preserving by construction. For
the Jin and Xin model, it can be shown to be entropy-satisfying and to preserve invariant domains.

The paper is organized as follows.
Section \ref{sec:continuous-setting} presents the main properties of
hyperbolic systems with relaxation and some exemples on which the
numerical schemes will be compared, namely the Jin-Xin model, the
Chaplygin model and an homogeneous two-phase model.
In Section \ref{sec:staggered-scheme} we construct the staggered
scheme that combines the centered approximation techniques of Toro and
coauthors with the approach of Béreux and Sainsaulieu, in which  numerical
fluxes are evaluated on states obtained by the resolution of the
source term. The scheme inherits the properties of the FORCE scheme,
namely consistency and a discrete entropy inequality. It is also
proved to be asymptotic-preserving and to preserve the invariant
domain for the equilibrium Jin–Xin model. The definition of the
approximate Riemann solver is addressed in Section
\ref{sec:ARS}. Following the Harten–Lax–van Leer methodology, we
impose integral consistency constraints to guarantee both consistency
and a discrete entropy inequality. To ensure the desired asymptotic
behavior, a correction is applied at the Godunov projection step.
Preservation of invariant domains and a local entropy inequality are
proven in the case of the Jin and Xin model. Finally, Section
\ref{sec:numerical-results} presents numerical tests that illustrate
the asymptotic-preserving properties of the two schemes.


\section{Continuous setting}
\label{sec:continuous-setting}

In this section, we summarize the main features of hyperbolic systems with relaxation. For clarity, we restrict attention to the one-dimensional setting, which streamlines the presentation of the numerical schemes introduced in the next section. For a general multidimensional framework, we refer the reader to
\cite{Liu87, chen94:relax, HN03, Yongconvergence, tzavaras}.

We also present three examples of systems on which numerical simulations
will be carried out in Section \ref{sec:numerical-results}.

\subsection{The general case}
\label{sec:general-case}
  
We are interested in hyperbolic systems with relaxation of the form
\begin{equation}
    \label{eq:HSR}
    \p_t \mathbf{W} + \p_x \mathbf{f}(\mathbf{W}) =
    \dfrac{1}{\varepsilon}\mathbf{R}(\mathbf{W}).
  \end{equation}
The vector of conservative variables $\mathbf{W}: \mathbb R^+ \times
\mathbb R$ takes values in a convex set of admissible
states $K \subset \mathbb R^n$. The flux function $\mathbf{f}$ is such
that, for each $\mathbf{W}\in K$, the Jacobian matrix 
$\nabla \mathbf f(\mathbf{W})$ has read eigenvalues $\lambda_i$, $i=1,\ldots,n$
\begin{equation*}
\lambda_1 \leq \lambda_2 \leq \dots \leq \lambda_n,
\end{equation*}
and is diagonalizable over $\mathbb{R}$ with  a complete
set of $n$
linearly independent eigenvectors.

The source term and the relaxation time $\varepsilon$ govern the
behavior of the system's solutions.
The stability of solutions and their behavior as 
$\varepsilon$ tends to zero have been the subject of numerous studies. 
Following \cite{Liu87, chen94:relax, HN03,
  Yongconvergence, tzavaras}, we assume there exists
a linear operator $M_1 : \mathbb R^n \to \mathbb R^k$  of rank $k\leq n$ such that
\begin{equation}
      \label{eq:M1}  
      M_1 \mathbf R(\mathbf W) = 0, \quad \forall \mathbf W \in K.
\end{equation}
The operator $M_1$ defines the conserved variables $\mathbf
W^{(1)}=M_1 \mathbf W$, that satisfy
\begin{equation}
    \label{eq:HSR1}
    \p_t \mathbf{W}^{(1)} + \p_x M_1\mathbf{f}(\mathbf{W}) = 0.
\end{equation}
There also exists a  linear operator $M_2 : \mathbb R^n \to \mathbb R^{n-k}$  of
rank $n-k$ exists such that
the operator $M=\begin{pmatrix}M_1\\M_2 \end{pmatrix}$ is nonsingular.
Setting $\mathbf{W}^{(2)}=M_2 \mathbf W$ and defining
\begin{equation}
  \label{eq:fk}
  \mathbf f^{(k)}(\mathbf W):=M_k\mathbf f(\mathbf W), \quad \mathbf
  R^{(k)}(\mathbf W)=M_k\mathbf R(\mathbf W),
  \quad k=1,2,
\end{equation}
the system \eqref{eq:HSR} can be rewritten as
\begin{equation}
  \label{eq:HSR2}
  \begin{cases}
    \p_t \mathbf{W}^{(1)} + \p_x \mathbf{f}^{(1)}(\mathbf{W}) =0,\\
    \p_t \mathbf{W}^{(2)} + \p_x \mathbf{f}^{(2)}(\mathbf{W}) =
    \dfrac{1}{\varepsilon}\mathbf{R}^{(2)}(\mathbf{W}).
  \end{cases}
\end{equation}
We may also use the notation $\mathbf f^{(k)}(\mathbf W^ {(1)},
\mathbf W^ {(2)})=\mathbf f^{(k)}(\mathbf W)$ in order to highlight the
dependence of the flux.
In the following we will consider that
\begin{equation}\label{condition jacob}
\nabla \mathbf{f}^{(2)}(\mathbf{W})\mathbf{R}(\mathbf{W})=0 
\end{equation}
and we focus on a specific expression of source terms, namely linear source terms in $\mathbf{W}^{(2)}$
\begin{equation}
\label{eq:ST-0}
\mathbf{R}^{(2)}(\mathbf{W}) = \mathbf Q(\mathbf{W}^{(1)}) -
\mathbf{W}^{(2)},
\end{equation}
where $\mathbf Q : \mathbb R^k \to \mathbb R^{n-k}$ may be nonlinear.

We assume there exists an \emph{equilibrium map}
\(E:M_1K\to K\) whose image is the equilibrium manifold associated with \eqref{eq:HSR}, namely
\begin{equation}
\mathcal M_{eq}:=\{\mathbf W\in K:\ \mathbf R(\mathbf W)=0\}.
\end{equation}
In particular, \(\mathcal M_{eq}\) can be parameterized by the conserved variables \(\mathbf W^{(1)}\in M_1K\). For source terms of type \eqref{eq:ST-0}, the equilibrium manifold is
simply  given by $\mathbf W \in K $ such that
\begin{equation}
  \label{eq:Eq-ST}
  \mathbf{W}^{(2)}=\mathbf Q(\mathbf{W}^{(1)}).
\end{equation}
In the limit epsilon approaches
0, the dynamics are described by the equilibrium system of conservation laws
\begin{equation}
    \label{eq:HE}
    \p_t \mathbf W^{(1)} + \p_x \mathbf f^{(1)}(\mathbf W^{(1)},
    \mathbf Q(\mathbf W^{(1)})) =0.
\end{equation}
The question of the stability of the asymptotic has been analyzed 
in \cite{chen94:relax} and also in \cite{Bouchut05} where stability
conditions were exhibited. A strong stability condition is the
existence of the entropy extension : the hierarchy of models \eqref{eq:HSR}-\eqref{eq:HE}
is endowed with an entropy structure,
in the sense that the Lax  entropy-entropy flux pair of the
equilibrium system \eqref{eq:HE} extends to an entropy-entropy flux
pair for the hyperbolic system with relaxation \eqref{eq:HSR}.
More precisely, \eqref{eq:HSR} admits a convex entropy
$H: K \to \mathbb R$ such that
$\nabla^2 H(\mathbf W) \nabla \mathbf f(\mathbf W)$ is
symmetric for all $\mathbf W \in K$ and which is dissipative, that is
\begin{equation}
  \label{eq:dissip}
  \nabla H(\mathbf W) \cdot \mathbf R(\mathbf W) \leq 0, \quad \mathbf W \in K.
\end{equation}
The condition on the hessian matrix ensures the existence of an entropy flux 
$\mathbf \Psi:K\to \mathbb R^p$ such that $\nabla H(\mathbf W) \nabla \mathbf
f(\mathbf W) = \nabla \mathbf \Psi(\mathbf W)$, for all $\mathbf W \in K$,
and every strong solution to \eqref{eq:HSR} satisfies
\begin{equation}
    \label{eq:ineg-HSR}
    \p_t H(\mathbf W) + \p_x \mathbf \Psi(\mathbf W) =
    \dfrac{1}{\varepsilon}\nabla H(\mathbf W) \cdot \mathbf R(\mathbf W).
\end{equation}
The stability condition introduced by \cite{chen94:relax} states that
the restriction of the entropy pair $(H,\mathbf \Psi)$ to the
equilibrium manifold $\mathcal M_{eq}$ defines the entropy flux pair
$(\eta, \mathbf \psi)$ for the equilibrium system \eqref{eq:HE}:
\begin{equation}
    \label{eq:entrop_HE}
    \eta(\mathbf W^{(1)}) = H(E(\mathbf W^{(1)})), \quad \mathbf \psi(\mathbf W^{(1)})
    = \mathbf \Psi(E(\mathbf W^{(1)})), \quad \forall \mathbf W^{(1)}\in \mathbf M_1
    K.
\end{equation}
This strong condition implies  Liu's subcharacteristic
condition, which is weaker \cite{Bouchut05}.
The subcharacteristic condition ensures that the eigenvalues of the
relaxed system \eqref{eq:HSR} are interlaced with those of the
equilibrium system \eqref{eq:HE} in the sense that the eigenvalue
$\tilde{\lambda_i}$, $i=1,\ldots,k$, lies in the closed interval
$[\lambda_i, \lambda_{i+n-k}]$.
Hence, this interlacing maintains the correct ordering of characteristic speedsand
preventing the occurrence of nonphysical wave interactions.

\subsection{Some exemples}
\label{sec:exemples}

\subsubsection{The Jin and Xin model}
\label{sec:jin-xin-model}

The context is the one detailled in \cite{Serre2000}. We only recall the main points, as in \cite{LS01}.

Consider a system of conservation laws
\begin{equation}
  \label{eq:SCL}
   \p_t u + \p_x g(u) = 0,
 \end{equation}
 with a  nonlinear flux $g$ of class $C^2(K)$, where
 $K$ is a convex  set of admissible solutions.
We assume that this system is endowed with a entropy–entropy flux pair $(\eta,q)$.

The Jin and Xin relaxation model approximates solutions of \eqref{eq:SCL} by the relaxation system
\begin{equation}
  \label{eq:JX}
  \begin{cases}
    \p_t u + \p_x v =0\\
    \p_t v + \lambda^2 \p_x u = \dfrac{1}{\varepsilon}(g(u)-v).
  \end{cases}
\end{equation}
The wave speed $\lambda$
complies with the subcharacteristic condition
\begin{equation}
  \label{eq:subcar}
  \lambda > \max_{u\in K}\rho(\nabla_ug(u)),
\end{equation}
where $\rho(\nabla_ug(u))$ denotes the spectral radius of the jacobian of
the flux $g$.
Moreover, under the subcharacteristic
condition \eqref{eq:subcar}, the following three properties hold:
\begin{enumerate}
\item\label{it:JX-hpm} The images $K_\pm$ of $K$ under the applications $h_\pm :u
  \mapsto u\pm \dfrac 1 \lambda g(u)$ are convex sets.
\item \label{it:JX-K} $K = \dfrac 1 2 (K_+ + K_-)$,
\item\label{it:JX-Dk} The set
    $D_k^\lambda := \{ (u,v) \text{ s.t. } u+ \dfrac 1 \lambda v \in K_+
    \text{ and }  u- \dfrac 1 \lambda v \in K_-\}$
  is an invariant domain for the system \eqref{eq:JX}.
\end{enumerate}
It was proved in \cite{Serre2000} that, under the
sub-characteristic condition, an entropy–entropy flux pair $(\eta,q):K\to
\mathbb R^2$ to \eqref{eq:SCL} extends to an  entropy–entropy flux pair 
$(H,Q):D_k^\lambda\to \mathbb R^2$ to \eqref{eq:JX} which coincides
with $(\eta,q)$ on the
equilibrium manifold
$\mathcal M_{eq} = \{(u,v)\in D_k^\lambda \text{ s.t. } v=g(u)\}$.


\subsubsection{Chaplygin gas model}
\label{sec:chaplygin-gas-model}

The Chaplygin gas system presented in \cite{serre2009multidimensional}
describes the dynamics of of fluid ot covolume $\tau\in \mathbb R^*_*$
evolving with the velocity $u$. It reads
\begin{equation*}
   \begin{cases}
    \partial_t \tau - \partial_x u = 0, \\
    \partial_t u + \partial_x \big(p(\mathcal{T}) + a^2(\mathcal{T} - \tau)\big) = 0, \\
    \partial_t \mathcal{T} = \frac{1}{\epsilon}(\tau - \mathcal{T}).
\end{cases} 
\end{equation*}
with $a>0$, and $\mathcal{T}>0$. The pressure function 
p is taken, for practical applications, as the perfect-gas law $p(\mathcal{T}) =
\mathcal{T}^{-\gamma}$, with $\gamma>1$.
This model derives from Suliciu's work
\cite{suliciu1998thermodynamics}.
The eigenvalues of the system are $\lambda_1=-a$, $\lambda_2=0$, and
$\lambda_3=a$, corresponding to the characteristic wave speeds.
The equilibrium system, obtained by setting $\tau = \mathcal{T}$,
corresponds to the $p$-system:
\begin{equation*}
\begin{cases}
    \partial_t \tau - \partial_x u = 0, \\
    \partial_t u + \partial_x p(\tau) = 0.
\end{cases}
\end{equation*}
An admissible entropy for the Suliciu's system is
\begin{equation*}
  H(\tau,u,\mathcal{T})=\frac{1}{2}|u|^2+\frac{1}{1-\gamma}\mathcal{T}^{1-\gamma}+
  \frac{a^2}{2}(\mathcal{T}^2-\tau^2)+(\mathcal{T}^{-\gamma}+a^2\mathcal{T})(\tau-\mathcal{T}).
\end{equation*}
This entropy is strictly convex and dissipative with respect to the
source term under the subcharacteristic condition
\begin{equation*}
  a^2>\underset{s\in \mathbb R_+^*}{\max}(-p'(s)).
\end{equation*}

\subsubsection{Two-phase flow model}
\label{sec:two-phase-flow}

We consider a two-phase compressible 
flow model in which the two phases, indexed by $k=1,2$, are at thermal and mechanical
equilibrium and that they evolve the same velocity $u$. Mass
transfer may occur between the two phases, that are supposed to be
perfect gases in numerical applications.
We refer to \cite{hel-seg-06} for detailed computations and derivation.
The model reads as follow
\begin{equation}
  \label{eq:phase_transition_HSR}
  \begin{aligned}
    \p_t \rho  + \p_x (\rho u ) &=0,\hfill\\
    \p_t (\rho u) + \p_x (\rho u^2+p) &= 0, \hfill\\
    \p_t (\rho E) + \p_x ((\rho E+p)u) &=0,\\
    \p_t (\rho \varphi) + \p_x (\rho u \varphi) &= 
    \dfrac{\rho}{\varepsilon} (\varphi_{eq}(\rho) - \varphi),
  \end{aligned}
\end{equation}
where $\rho$ denotes the density of the mixture, $E=\demi u^2+e$ is
the total energy with $e$ the internal energy, and $\varphi\in [0,1]$ is the
mass fraction.
The mass fraction, which indicates the phase state, satisfies a
convection equation with a relaxation source term defined by
\begin{equation*}
  \varphi_{eq}(\rho)  =    \begin{cases}
    1 & \text{  if  } \rho\leq \rho_1^*,\\
     \dfrac{1/\rho - \tau_2^*}{\tau_1^* -
      \tau_2^*} &\text{  if  }  \rho_1^*\leq\rho\leq \rho_2^*,\\
    0 & \text{  if  } \rho_2^*\leq \rho,
  \end{cases}
\end{equation*}
with
\begin{equation*}
  \begin{aligned}
   \rho_1^* =  \exp(-1) \left( \dfrac{\gamma_2-1}{\gamma_1-1}
	      \right)^{\dfrac{\gamma_2}{\gamma_2-\gamma_1}},
  \quad
  \rho_2^* =  \exp(-1) \left( \dfrac{\gamma_2-1}{\gamma_1-1}
	      \right)^{\dfrac{\gamma_1}{\gamma_2-\gamma_1}}. 
  \end{aligned}
\end{equation*}
Here $\gamma_1$ and $\gamma_2$ are perfect gas coefficients.
To close the system, we use the mixture pressure law
\begin{equation*}
  p=p(\rho,e,\varphi)=(\gamma(\varphi)-1) \rho e,
\end{equation*}
with $\gamma(\varphi) = \gamma_1 \varphi +  \gamma_2 (1-\varphi)$.

 As $\varepsilon$ goes to zero, the thermodynamical equilibrium
is reached.  This asymptotic defines the equilibrium model
\begin{equation}
  \label{eq:phase_transition_HE}
  \begin{aligned}
    \p_t \rho  + \p_x (\rho u ) &=0,\hfill\\
    \p_t (\rho u) + \p_x (\rho u^2+p_{eq}) &= 0, \hfill\\
    \p_t (\rho E) + \p_x ((\rho E+p_{eq})u) &=0,
  \end{aligned}
\end{equation}
with the equilibrium pressure law introduced in \cite{hel-seg-06}
$p_{eq}= p(\rho, e, \varphi_{eq}(\rho))$ which reduces to 
\begin{equation}
  \label{eq:peq}
  p_{eq}= 
  \begin{cases}
    (\gamma_1-1)\rho e, & \text{  if }  \rho\leq \rho_1^*,\\
    (\gamma_1-1)\rho_1^* e, & \text{ if }
    \rho_1^*\leq\rho\leq \rho_2^*,\\
    (\gamma_2-1)\rho e, & \text{ if }  \rho_2^*\leq \rho.
  \end{cases}
\end{equation}

The entropy of the system \eqref{eq:phase_transition_HSR} is
  not strictly convex, see \cite{FM19} and references therein. 

\section{Staggered scheme }
\label{sec:staggered-scheme}

We present in this section a finite volume scheme which is inspired by
both the
centred scheme approaches, the so-called FORCE schemes, developed by
Toro and co-authors (see the
review \cite{ChenToro03}, and the adaptation to the two-fluid models in
\cite{TORO2020}), and the two-step staggered scheme proposed
in \cite{bereux1997roe}.
In this work, we retain the
unsplit framework of the latter reference and couple it with the FORCE approach. The
resulting scheme is consistent for any $\varepsilon$ and preserves the
desired asymptotic properties.

\subsection{Definition of the scheme}
\label{sec:definition-scheme}

Consider a piecewise constant approximation sequence $(\mathbf {W}_j^n)_{j\in
  \mathbb Z}$, where $\mathbf W_j^n$ approximates $\mathbf W(t,x)$ for all $x$
in the cell $(x_{j-1/2}, x_{j+1/2})$ of size $\Delta x$ at time
$t^n$. For simplicity, we use a uniform mesh and let $x_j$
 denote the center of  $(x_{j-1/2}, x_{j+1/2})$.
The time step $\Delta t$ satisfies the Courant-Friedrichs-Levy condition
\begin{equation}
    \label{CFLstaggered}
    \Delta t\leq \frac{\Delta x}{\underset{1\leq i\leq
        n}{\max}\lambda_i}.
  \end{equation}
\begin{figure}[ht]
    \centering
    \includegraphics[width=1\textwidth]{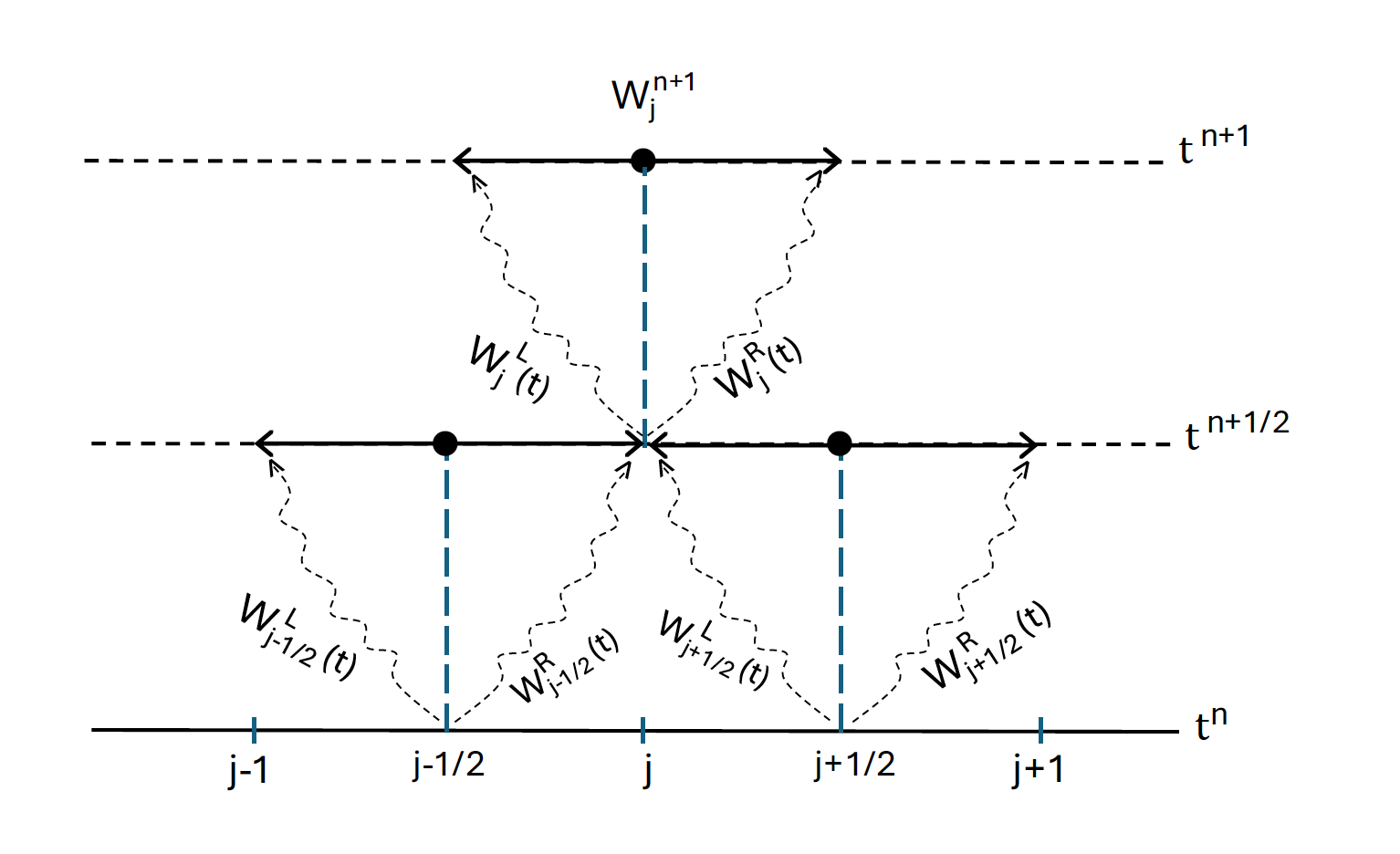}
    \label{fig:godunov}
    \caption{Illustration of the staggered scheme. The numerical
      fluxes depend on the evaluation of the source terms at each
      interfaces.}
\end{figure} 
Following \cite{bereux1997roe} the algorithm updates $\mathbf W_j^{n}$ to
a new value $\mathbf W_j^{n+1}$ in two steps.
\begin{enumerate}
\item From time $t^n$ to $t^{n+\demi}:=t^n+ \Delta t/2$
  \begin{enumerate}[label=(\Alph*)]
  \item\label{it:step1:ST} Source term time integration:
    
    Compute the solutions $\mathbf W_{j-1/2}^L(t)$ and
    $\mathbf W_{j-1/2}^R(t)$ of the following ODE systems for
    $t\in (0,\Delta t/2)$:
    \begin{equation}
      \label{eq:step1:ST}
      \begin{cases}
        \dfrac{\mathrm d }{\mathrm d t}  \mathbf W_{j-1/2}^L(t) =
        \dfrac{1}{\varepsilon} \mathbf{R}(\mathbf{W}_{j-1/2}^L(t)),\\
        \mathbf W_{j-1/2}^L(0) = \mathbf W_{j-1}^n,
      \end{cases}
      \quad
      \begin{cases}
        \dfrac{\mathrm d }{\mathrm d t} \mathbf W_{j-1/2}^R(t) =
        \dfrac{1}{\varepsilon} \mathbf{R}(\mathbf W_{j-1/2}^R(t)),\\
        \mathbf W_{j-1/2}^R(0) = \mathbf W_{j}^n.
      \end{cases}
    \end{equation}
  \item\label{it:step2:sys1} Integration of the system \eqref{eq:HSR}
    over $(t^n, t^n+ \Delta t /2)\times (x_{j-1},x_j)$, that is
    \begin{equation}
      \label{eq:step2:sys1}
      \begin{aligned}
        \mathbf W_{j-1/2}^{n+1/2} &=
         \dfrac{1}{2}\left(\mathbf W_{j}^n+\mathbf W_{j-1}^n\right)
        - \dfrac{1}{\Delta x}\int_{t^n}^{t^{n+\demi}}
        \mathbf{f}(\mathbf W(t,x_j)) \mathrm d t\\
        &+ \dfrac{1}{\Delta x}\int_{t^n}^{t^n+\demi}
        \mathbf{f}(\mathbf W(t,x_{j-1})) \mathrm d t\\
        &+ \dfrac{1}{\Delta x}\int_{x_{j-1}}^{x_j}
        \int_{t^n}^{t^{n+\demi}}
        \dfrac{1}{\varepsilon}\mathbf{R}(\mathbf{W}(t,x))\mathrm{d} t
        \mathrm{d} x,
      \end{aligned}
    \end{equation}  
    and consider the following approximations:
    \begin{itemize}
    \item Flux approximations
      \begin{equation}
        \label{eq:2step:flux}
        \begin{aligned}
          \int_{t^n}^{t^n + \demi} \mathbf f(\mathbf
          W(t,x_{j-1}))\mathrm d t &\simeq
          \dfrac{\Delta t}{2} \mathbf f \left(\mathbf
            W_{j-1/2}^L\left( \Delta t/2\right)\right),\\
          \int_{t^n}^{t^n + \demi} \mathbf f(\mathbf
          W(t,x_j))\mathrm d t &\simeq \dfrac{\Delta t}{2} \mathbf f
          \left(\mathbf W_{j-1/2}^R\left( \Delta t/2\right)\right).\\
        \end{aligned}
      \end{equation}
    \item Source term approximation
      \begin{equation}
        \label{eq:2step:ST}   
        \dfrac{1}{\Delta
          x}\int_{x_{j-1}}^{x_j}\int_{t^n}^{t^{n+\demi}}
        \dfrac{1}{\varepsilon}\mathbf R(\mathbf W(t,x))\mathrm d t \mathrm d x\simeq
        \dfrac{\Delta t}{2\varepsilon} \mathbf R \left(\mathbf W_{j-1/2}^{n+1/2}\right).
      \end{equation}
    \end{itemize}
    The previous approximations 
    lead to the following
    staggered
      approximation
      \begin{equation}
        \label{eq:sch-2step}
        \begin{aligned}
          \mathbf W_{j-1/2}^{n+1/2} &=
          \dfrac{1}{2}\left(\mathbf W_{j}^n+\mathbf W_{j-1}^n\right)
          - \dfrac{\Delta t}{2\Delta x}
          \left[\mathbf f\left( \mathbf W_{j-1/2}^R\left(
                \Delta t/2\right)\right) \right.\\
          &\left.- \mathbf f \left(\mathbf W_{j-1/2}^L
              \left( \Delta t/2\right)\right) \right] +
          \dfrac{\Delta t}{2\varepsilon} \mathbf R\left(\mathbf
            W_{j-1/2}^{n+1/2}\right).
        \end{aligned}
      \end{equation}  
  \end{enumerate}
\item\label{it:stepA+B} Repeat step \ref{it:step1:ST} and \ref{it:step2:sys1} from
  time $t^{n+\demi}$ to $t^{n+1}$ over the cell
  $[x_{j-1/2},x_{j+1/2}]$ to get the cell centered approximation of
  $\mathbf W_j^{n+1}$. 
\end{enumerate}

 At the end of step \ref{it:stepA+B}, and according to the expression \eqref{eq:sch-2step},
 the updated value at time $t^{n+1}$ actually reads
 \begin{equation}
   \label{eq:sch-final}
   \begin{aligned}
     \mathbf{W}_{j}^{n+1} &= \mathbf{W}_{j}^{n} - \dfrac{\Delta
       t}{\Delta x} \left( \tilde{\mathbf{F}}_{j+\demi} -
       \tilde{\mathbf{F}}_{j-\demi} \right) \\
     &+ \dfrac{\Delta t}{4\varepsilon} \left( 2\mathbf{R}\left(
         \mathbf{W}_{j}^{n+1} \right) + \mathbf{R}\left(
         \mathbf{W}_{j-1/2}^{n+1/2} \right) + \mathbf{R}\left(
         \mathbf{W}_{j+1/2}^{n+1/2} \right) 
     \right),
   \end{aligned}
 \end{equation}
with the numerical flux $\tilde{\mathbf{F}}_{j+\demi}$ defined
 by
 \begin{equation}
   \label{eq:flux-stag}
   \begin{aligned}
     \tilde{\mathbf{F}}_{j+\demi}&=  \dfrac{1}{4}\Bigg[\mathbf
     2\mathbf f\bigg( \mathbf W_{j}^R\Big( \Delta t/2\Big)\bigg)+\mathbf
     f\Big( \mathbf W_{j-1/2}^R\Big( \Delta t/2\Big)\Big)\Bigg.\\
     &\Bigg.+
     \mathbf f \Big(\mathbf W_{j+1/2}^R\Big( \Delta t/2\Big)\Big)-\frac{\Delta x}{\Delta t}
     \Big(\mathbf{W}_{j+1}^n-\mathbf{W}_{j}^n\Big)\Bigg].
   \end{aligned}
 \end{equation}

\subsection{Properties of the staggered scheme}
\label{sec:prop-stagg-scheme}

 While this scheme is applicable to general source term and systems of the form
 \eqref{eq:HSR}, we focus here on its implementation for the specific
 structure given by \eqref{eq:HSR2}–\eqref{eq:ST-0}.
Within this framework, the ODE systems \eqref{eq:step1:ST} can be
explicitly solved: for  $\mathbf{W}_{j-1/2}^R$, the solution is given
by
\begin{equation}
  \label{solutionEDO}
\begin{aligned}
\mathbf{W}_{j-\demi}^{(1),R}(t)&=\mathbf{W}_j^{(1),n},\\
\mathbf{W}_{j-\demi}^{(2),R}(t)&=\Big(\mathbf{W}_j^{(2),n}-
\mathbf{Q}(\mathbf{W}_j^{(1),n})\Big)e^{\frac{-t}{\varepsilon}}+\mathbf{Q}(\mathbf{W}_j^{(1),n}).
\end{aligned}
\end{equation}

It turns out that the numerical flux \eqref{eq:flux-stag} corresponds to an extension 
of the FORCE flux, applied to the states
$W_{j\pm1/2}^{L,R}\Big( t^{n+\demi}\Big)$. According to
\cite{ChenToro03}, this numerical flux corresponds
to the arithmetic average of the Lax-Friedrichs (LF) and the
Richtmyer two-step Lax-Wendroff scheme (RI) fluxes, namely
\begin{equation*}
F^{\text{FORCE}}_{j+\demi} = \dfrac{1}{2} \left(
  F^{\text{RI}}_{j+\demi} + F^{\text{LF}}_{j+\demi} \right).
\end{equation*}
The following asymptotic preserving property relies on this analogy.

\begin{Prop}[Asymptotic preserving property]
  Let the constant sequence of cell-averaged values
  $(\mathbf{W}_j^{(1),n},\mathbf{W}_j^{(2),n})$ be given at time
  $t^n$, for $j\in \mathbb Z$.
  Under the CFL condition \eqref{CFLstaggered},
  the scheme \eqref{eq:sch-final} is asymptotic preserving, in the
  sense that it is consistent with solutions of the hyperbolic model
  \eqref{eq:HSR} for all $\varepsilon>0$ and,
  in the limit $\varepsilon\to 0$, it converges to the stable and
  consistent FORCE scheme for the hyperbolic equilibrium model
  \eqref{eq:HE}.
\end{Prop}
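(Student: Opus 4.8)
The plan is to handle the two assertions of the proposition in turn --- consistency with \eqref{eq:HSR} at fixed $\varepsilon>0$, then the $\varepsilon\to0$ limit --- after a preliminary observation that makes both arguments elementary. For the source terms \eqref{eq:ST-0}, the intermediate ODEs \eqref{eq:step1:ST} are solved \emph{exactly} by \eqref{solutionEDO}, and the relations \eqref{eq:sch-2step} and \eqref{eq:sch-final}, although formally implicit, are in fact explicit: since $\mathbf R$ leaves $\mathbf W^{(1)}$ untouched ($M_1\mathbf R=0$) and $\mathbf R^{(2)}$ is affine in $\mathbf W^{(2)}$, the $\mathbf W^{(1)}$--components of \eqref{eq:sch-2step}--\eqref{eq:sch-final} are computed first with no source contribution, and the remaining $\mathbf W^{(2)}$--components then solve a linear system whose matrix is $\bigl(1+\tfrac{\Delta t}{2\varepsilon}\bigr)\mathrm{Id}$, hence invertible for every $\varepsilon>0$. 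In particular the scheme is well defined under the sole condition \eqref{CFLstaggered}, with no $\varepsilon$--dependent time-step restriction.

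For consistency with \eqref{eq:HSR} at fixed $\varepsilon>0$, I would view \eqref{eq:sch-final} as a conservative finite volume scheme with numerical flux \eqref{eq:flux-stag} and an implicit source discretisation, and check the two building blocks. First, $\tilde{\mathbf F}$ is a consistent numerical flux: when all its arguments equal a common state $\mathbf W$, letting $\Delta t\to0$ in \eqref{solutionEDO} sends every source-evolved state to $\mathbf W$, so $\tilde{\mathbf F}\to\tfrac14\bigl(2\mathbf f(\mathbf W)+\mathbf f(\mathbf W)+\mathbf f(\mathbf W)\bigr)=\mathbf f(\mathbf W)$, the numerical-viscosity term $\tfrac{\Delta x}{\Delta t}(\mathbf W_{j+1}^n-\mathbf W_j^n)$ being exactly of $O(\Delta x)$ on smooth data under \eqref{CFLstaggered}; indeed \eqref{eq:flux-stag} is just the FORCE flux of \cite{ChenToro03,chen2003centred} evaluated at the source-evolved states. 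Second, as the mesh is refined the three states $\mathbf W_j^{n+1}$, $\mathbf W_{j\pm1/2}^{n+1/2}$ in the source term of \eqref{eq:sch-final} all converge to the local value, so $\tfrac1{4\varepsilon}\bigl(2\mathbf R(\mathbf W_j^{n+1})+\mathbf R(\mathbf W_{j-1/2}^{n+1/2})+\mathbf R(\mathbf W_{j+1/2}^{n+1/2})\bigr)\to\tfrac1\varepsilon\mathbf R(\mathbf W)$. Tracing back the quadrature errors in \eqref{eq:2step:flux}--\eqref{eq:2step:ST}, which vanish with the mesh because the stiff contribution is integrated exactly, then gives consistency of \eqref{eq:sch-final} with \eqref{eq:HSR} for every fixed $\varepsilon>0$.

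For the limit $\varepsilon\to0$, I would pass to the limit in \eqref{solutionEDO}, \eqref{eq:sch-2step} and \eqref{eq:sch-final} with $\Delta x,\Delta t$ fixed. By \eqref{solutionEDO}, $e^{-t/\varepsilon}\to0$, so each source-evolved state appearing in \eqref{eq:flux-stag} reaches the equilibrium manifold: its $\mathbf W^{(1)}$--component is unchanged and its $\mathbf W^{(2)}$--component becomes $\mathbf Q(\mathbf W^{(1)})$, whence $\mathbf f(\cdot)\to\mathbf f(E(\cdot))$, with $M_1$--part equal to the equilibrium flux $\mathbf f^{(1)}(\mathbf W^{(1)},\mathbf Q(\mathbf W^{(1)}))$ of \eqref{eq:HE}. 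Using the explicit $\mathbf W^{(2)}$--updates noted above, together with $\tfrac{2\varepsilon}{2\varepsilon+\Delta t}\to0$ and $\tfrac{\Delta t}{2\varepsilon+\Delta t}\to1$, one finds $\mathbf W_{j\pm1/2}^{(2),n+1/2}\to\mathbf Q(\mathbf W_{j\pm1/2}^{(1),n+1/2})$ and then $\mathbf W_j^{(2),n+1}\to\mathbf Q(\mathbf W_j^{(1),n+1})$: the half-step and full-step states collapse onto $\mathcal M_{eq}$. On the other hand, the $M_1$--projection of \eqref{eq:sch-final} has no source term, so $\mathbf W_j^{(1),n+1}=\mathbf W_j^{(1),n}-\tfrac{\Delta t}{\Delta x}\bigl(M_1\tilde{\mathbf F}_{j+1/2}-M_1\tilde{\mathbf F}_{j-1/2}\bigr)$, and substituting the limiting states shows that $\mathbf W_{j+1/2}^{(1),n+1/2}$ is the staggered Richtmyer predictor for \eqref{eq:HE} and that $M_1\tilde{\mathbf F}_{j+1/2}$ becomes $\tfrac14\bigl(2\mathbf f^{(1)}(E(\mathbf W_{j+1/2}^{(1),n+1/2}))+\mathbf f^{(1)}(E(\mathbf W_j^{(1),n}))+\mathbf f^{(1)}(E(\mathbf W_{j+1}^{(1),n}))\bigr)-\tfrac{\Delta x}{4\Delta t}(\mathbf W_{j+1}^{(1),n}-\mathbf W_j^{(1),n})$ --- precisely the average of the Lax--Friedrichs and Richtmyer fluxes, i.e. the FORCE flux, for the equilibrium model \eqref{eq:HE}. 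Hence the limit scheme is FORCE applied to \eqref{eq:HE}, with $\mathbf W^{(2)}$ slaved to $\mathbf Q(\mathbf W^{(1)})$; its consistency is classical for FORCE, see \cite{ChenToro03,chen2003centred}, and its stability holds under \eqref{CFLstaggered} since, by the subcharacteristic condition, the equilibrium wave speeds $\tilde\lambda_i$ lie in $[\lambda_i,\lambda_{i+n-k}]$, so $\max_i|\tilde\lambda_i|\le\max_i|\lambda_i|$.

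The main obstacle is the bookkeeping in the $\varepsilon\to0$ step: one must track which intermediate ODE is launched from which cell value across the two half-steps, verify that the weights $2,1,1$ and the viscous coefficient $\Delta x/\Delta t$ in \eqref{eq:flux-stag} match the FORCE formula of \cite{ChenToro03} once the states are replaced by their equilibrium limits, and check that the $\tfrac1\varepsilon$--terms in \eqref{eq:sch-2step}--\eqref{eq:sch-final} converge rather than merely remain bounded. That \eqref{eq:step1:ST} is solved exactly and $\mathbf R^{(2)}$ is affine in $\mathbf W^{(2)}$ is what makes all these limits explicit and removes the need for any fixed-point or implicit-function argument.
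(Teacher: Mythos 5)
Your proposal is correct and follows essentially the same route as the paper's proof: flux consistency of $\tilde{\mathbf F}$ for fixed $\varepsilon$, then the limit $\varepsilon\to0$ in the exact ODE solutions \eqref{solutionEDO} and in the implicit half-step, forcing the intermediate states onto $\mathcal M_{eq}$ so that the $M_1$-projection of \eqref{eq:sch-final} reduces to the FORCE scheme for \eqref{eq:HE}. You are in fact somewhat more careful than the paper on two points — the consistency of the flux requires letting the source-evolved states return to $\mathbf W$ as $\Delta t\to0$ (the paper asserts they equal $\mathbf W$ outright), and you justify stability of the limit scheme under \eqref{CFLstaggered} via the subcharacteristic interlacing — but these are refinements of the same argument, not a different one.
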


\begin{proof}
  We first address consistency by evaluating $\tilde{\mathbf{F}}_{j+\demi}(\mathbf W,\mathbf W)$ for any $\varepsilon>0$ and $\mathbf W\in K$.
  In \eqref{eq:flux-stag}, it holds
  \begin{equation*}
  \mathbf W_{j}^R\Big( \Delta t/2\Big)
  =\mathbf W_{j-\demi}^R\Big( \Delta t/2\Big)
  =\mathbf W_{j+\demi}^R\Big( \Delta t/2\Big)=\mathbf W.
\end{equation*}  
  Hence $\tilde{\mathbf{F}}_{j+\demi}(\mathbf W, \mathbf W)
  = \mathbf f(\mathbf W)$.
  In particular, if the source term
  vanishes, the numerical scheme reduces to the standard FORCE
  scheme for the homogeneous hyperbolic system associated with \eqref{eq:HSR2}.

  We now establish the asymptotic preserving property in the limit
  $\varepsilon\to 0$, focusing on source terms of the form
  \eqref{eq:ST-0}. At equilibrium, \eqref{eq:sch-2step} yields $\mathbf R(\mathbf W_{j-\demi}^{\,n+\demi})=0$, and using \eqref{eq:ST-0} we obtain
   \begin{equation*}
    \mathbf{W}_{j\pm\demi}^{(2),n+\demi}=\mathbf{Q}\Big(\mathbf{W}_{j\pm\demi}^{(1),n+\demi}\Big).
  \end{equation*}
  By substituting the solution of the ODE step \eqref{solutionEDO} into \eqref{eq:sch-final}, we obtain
  the following consistent scheme for the
  equilibrium model \eqref{eq:HE} which reads
  \begin{equation}
    \label{eq:sch-final0} 
    \mathbf{W}_{j}^{(1),n+1} = \mathbf{W}_{j}^{(1),n} - \dfrac{\Delta
      t}{\Delta x} \left( \tilde{\mathbf{F}}^{(1)}_{0,j+\demi} -
      \tilde{\mathbf{F}}^{(1)}_{0,j-\demi} \right),
  \end{equation}
where the numerical flux is the FORCE flux \cite{ChenToro03},
\begin{equation}
\begin{aligned}
\tilde{\mathbf{F}}^{(1)}_{0,j+\demi} &= \dfrac{1}{4} \Bigg[
    2\mathbf{f}^{(1)}\left(\mathbf{W}_{j+\demi}^{(1),n+\demi},
      \mathbf{Q}\left(\mathbf{W}_{j+\demi}^{(1),n+\demi}\right)\right)
    \\
    &\quad + 
        \mathbf{f}^{(1)}\left(\mathbf{W}_{j+1}^{(1),n},
          \mathbf{Q}\left(\mathbf{W}_{j+1}^{(1),n}\right)\right)
        + \mathbf{f}^{(1)}\left(\mathbf{W}_{j}^{(1),n},
          \mathbf{Q}\left(\mathbf{W}_{j}^{(1),n}\right)
    \right) \\
    &\quad - \dfrac{\Delta x}{\Delta t}
    \left( \mathbf{W}_{j+1}^{(1),n} - \mathbf{W}_{j}^{(1),n} \right)
    \Bigg].
\end{aligned}
\end{equation}
\end{proof}
It was proved in \cite{ChenToro03} that the FORCE scheme is consistent with
the Lax entropy inequality for hyperbolic systems of conservation laws
of the form \eqref{eq:HE}, in the sense that the finite volume scheme
satisfies  a \textit{global} discrete entropy inequality
\begin{equation*}
  \sum_{j\in \mathbb Z}\dfrac{\eta(\mathbf W_j^{n+1})-\eta(\mathbf
    W_j^{n})}{\Delta t} \Delta x\leq 0,
\end{equation*}
where $\eta$ is the entropy for the equilibrium system \eqref{eq:HE}.
Since the scheme \eqref{eq:sch-final}–\eqref{eq:flux-stag} reduces, as $\varepsilon\to0$, to the FORCE
scheme applied to the equilibrium system \eqref{eq:HE}, and, as $\varepsilon\to\infty$, to the FORCE
scheme for the homogeneous hyperbolic system associated with \eqref{eq:HSR2}, the staggered scheme
\eqref{eq:sch-final} satisfies a global version of discrete entropy inequality in these two regimes.

Because the numerical fluxes \eqref{eq:flux-stag} depend on the exact solutions of the ODE step
\eqref{eq:step1:ST}, $L^\infty$ stability and invariant-domain preservation are not immediate.
For the Jin–Xin model \eqref{eq:JX}, however, one can prove that the staggered scheme
\eqref{eq:sch-final} preserves the invariant domain $K$.

Applying \eqref{eq:sch-2step} and \eqref{eq:sch-final} to the Jin and
Xin model \eqref{eq:JX} gives
\begin{equation}
\label{eq:JX-stag}
\begin{aligned}
u_{j-\demi}^{n+\demi}&= \frac{u_{j-1}^n+u_{j}^n}{2}-\frac{\Delta
  t}{2\Delta x}\bigg((v_{j}^n-g(u_{j}^n))e^{-\frac{\Delta
    t}{2\epsilon}}+g(u_{j}^n)\\
&\qquad \qquad-(v_{j-1}^n-g(u_{j-1}^n))e^{-\frac{\Delta
    t}{2\epsilon}}-g(u_{j-1}^n)\bigg),\\
v_{j-\demi}^{n+\demi}&=\left(\frac{1}{1+\frac{\Delta
      t}{2\epsilon}}\right)\left(\frac{v_{j-1}^n+v_{j}^n}{2}-\frac{\lambda^2\Delta
    t}{2\Delta x}(u_j^n-u_{j-1}^n)+\frac{\Delta
    t}{2\epsilon}g(u_{j-\demi}^{n+\demi})\right),\\
u_{j}^{n+1}&= \frac{u_{j-\demi}^{n+\demi}+u_{j+\demi}^{n+\demi}}{2} - \frac{\Delta t}{2 \Delta
  x}\bigg((v_{j+\demi}^{n+\demi}-g(u_{j+\demi}^{n+\demi}))e^{-\frac{\Delta
    t}{2\epsilon}}+g(u_{j+\demi}^{n+\demi})\\
&\qquad \qquad
-(v_{j-\demi}^{n+\demi}-g(u_{j-\demi}^{n+\demi}))e^{-\frac{\Delta
    t}{2\epsilon}}-g(u_{j-\demi}^{n+\demi})\bigg)\\
v_{j}^{n+1}&= \left( \frac{1}{1 + \frac{\Delta t}{2\epsilon}} \right)
\left( \frac{v_{j-\demi}^{n+\demi}+v_{j+\demi}^{n+\demi}}{2} - \frac{\Delta t \lambda^2}{2 \Delta
    x}(u_{j+\demi}^{n+\demi} - u_{j-\demi}^{n+\demi}) + \frac{\Delta
    t}{2 \epsilon}g(u_j^{n+1}) \right).
\end{aligned}
\end{equation}

In order to prove that the scheme preserves the invariant domain \(K\),
we prove the $L^\infty$ stability property of the first two steps
\ref{it:step1:ST}-\ref{it:step2:sys1}
of the
algorithm. The proofs are based on the symmetric variables

\begin{equation}
  \label{eq:JX-rs}
  r = u+ \dfrac{1}{\lambda}v, \qquad  s= u- \dfrac{1}{\lambda}v,
\end{equation}
and on the maps \(h_\pm(u) := u \pm \frac{1}{\lambda} g(u)\) introduced above.

\begin{Prop}
  \label{prop:JX-Stag-step2-Linf}
  If $(u_j^n,v_j^n)\in
  D_{\mathcal{K}}^{\lambda}$, for all $j\in\mathbb{Z}$, then
  the solutions $r_{j+1/2}^R(t)=u_{j+1/2}^R(t)+\dfrac 1 \lambda
  v_{j+1/2}^R(t)$ and
  $s_{j+1/2}^R(t)=u_{j+1/2}^R(t)-\dfrac 1 \lambda v_{j+1/2}^R(t)$ associated to the Cauchy problems
  \eqref{eq:step1:ST} belong to $K_+$ and $K_-$ respectively, for all $t>0$.

  Moreover, under the subcharacteristic condition \eqref{eq:subcar} and the CFL
  condition \eqref{CFLstaggered}, if $(u_j^n,v_j^n)\in
  D_{\mathcal{K}}^{\lambda}$, for all $j\in\mathbb{Z}$, then
  $u_{j-\demi}^{n+\demi}$ belongs to $K$,
  for
  all $ j\in\mathbb{Z}$.
\end{Prop}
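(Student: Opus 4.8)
The plan is to pass to the Riemann invariants $r=u+\frac1\lambda v$ and $s=u-\frac1\lambda v$ of \eqref{eq:JX-rs}, in which $D_{\mathcal K}^\lambda$ is exactly the set $\{r\in K_+,\ s\in K_-\}$, and to use repeatedly that $K_+$ and $K_-$ are convex (property \eqref{it:JX-hpm}) and that $K=\demi(K_++K_-)$ (property \eqref{it:JX-K}); both hold under the subcharacteristic condition \eqref{eq:subcar}. Note that, since $(u_j^n,v_j^n)\in D_{\mathcal K}^\lambda$ for every $j$, we have $r_j^n\in K_+$, $s_j^n\in K_-$, hence $u_j^n=\demi(r_j^n+s_j^n)\in K$ and therefore $h_\pm(u_j^n)\in K_\pm$ by definition of $K_\pm$.

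\emph{The ODE step.} For the Jin--Xin model the Cauchy problems \eqref{eq:step1:ST} keep $u$ frozen and relax $v$ exponentially towards $g(u)$; from the closed form \eqref{solutionEDO}, writing $\theta(t)=e^{-t/\varepsilon}$, which lies in $(0,1]$ for $t\ge0$, and using the identity $\frac1\lambda\big(v_{j+1}^n-g(u_{j+1}^n)\big)=r_{j+1}^n-h_+(u_{j+1}^n)$ together with its $s$-analogue, I would rewrite the solution at the interface $j+\demi$ as
\[
r_{j+\demi}^R(t)=\theta(t)\,r_{j+1}^n+(1-\theta(t))\,h_+(u_{j+1}^n),\qquad s_{j+\demi}^R(t)=\theta(t)\,s_{j+1}^n+(1-\theta(t))\,h_-(u_{j+1}^n).
\]
Each right-hand side is a convex combination of two points of the convex set $K_+$ (resp. $K_-$), so $r_{j+\demi}^R(t)\in K_+$ and $s_{j+\demi}^R(t)\in K_-$ for all $t>0$. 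The identical computation for the left solution at $j-\demi$ (initial datum $\mathbf W_{j-1}^n$, which also lies in $D_{\mathcal K}^\lambda$) gives $r_{j-\demi}^L(t)\in K_+$ and $s_{j-\demi}^L(t)\in K_-$.

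\emph{The update of $u_{j-\demi}^{n+\demi}$.} The bracketed terms in the first line of \eqref{eq:JX-stag} are exactly the values of the ODE solutions $v^L_{j-\demi},v^R_{j-\demi}$ at $t=\Delta t/2$, so that line reads $u_{j-\demi}^{n+\demi}=\demi(u_{j-1}^n+u_j^n)-\frac{\Delta t}{2\Delta x}\big(v^R_{j-\demi}(\Delta t/2)-v^L_{j-\demi}(\Delta t/2)\big)$. Substituting $u=\demi(r+s)$, $v=\frac{\lambda}{2}(r-s)$ for the four states $r^{L,R}_{j-\demi}(\Delta t/2)$, $s^{L,R}_{j-\demi}(\Delta t/2)$ and setting $\mu:=\frac{\lambda\Delta t}{2\Delta x}$, a short regrouping of the $r$- and $s$-contributions yields
\[
u_{j-\demi}^{n+\demi}=\demi\Big[\big(\tfrac12+\mu\big)r^L_{j-\demi}(\tfrac{\Delta t}{2})+\big(\tfrac12-\mu\big)r^R_{j-\demi}(\tfrac{\Delta t}{2})\Big]+\demi\Big[\big(\tfrac12-\mu\big)s^L_{j-\demi}(\tfrac{\Delta t}{2})+\big(\tfrac12+\mu\big)s^R_{j-\demi}(\tfrac{\Delta t}{2})\Big].
\]
The CFL condition \eqref{CFLstaggered}, which for the Jin--Xin eigenvalues $\pm\lambda$ reduces to $\mu\le\demi$, makes the four weights $\tfrac12\pm\mu$ nonnegative, so each bracket is a convex combination and thus lies in $K_+$, resp. $K_-$, by the ODE step and convexity. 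Hence $u_{j-\demi}^{n+\demi}\in\demi(K_++K_-)=K$ by \eqref{it:JX-K}, which is the claim.

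The computations are elementary; the only points that deserve care are the algebraic identification of the ODE solution as the convex combination $\theta r^n+(1-\theta)h_+(u^n)$ (and its $s$-counterpart), and, in the second part, verifying that the CFL bound $\mu\le\demi$ is precisely what forces the coefficients $\tfrac12\pm\mu$ to be nonnegative, so that the convex-combination structure — and then the Minkowski identity $K=\demi(K_++K_-)$ — applies. Beyond keeping the bookkeeping in the symmetric variables straight, I do not anticipate a real obstacle.
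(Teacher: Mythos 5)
Your proof is correct and follows essentially the same route as the paper: the ODE solution is identified as the convex combination $\theta r^n+(1-\theta)h_\pm(u^n)$ in $K_\pm$, and the half-step update is shown to lie in $K=\demi(K_++K_-)$ using the CFL bound $\tfrac{\lambda\Delta t}{2\Delta x}\le\demi$. The only (immaterial) difference is the regrouping in the second step: you diagonalize fully into the Riemann invariants and obtain one convex combination in $K_+$ plus one in $K_-$, whereas the paper keeps the $u^{L,R}$ terms in $K$ and isolates a single $\demi r+\demi s$ contribution.
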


\begin{proof}
Using \eqref{solutionEDO} and the definition of $r_{j+1/2}^R(t)$ lead
to
\begin{equation*}
  \begin{aligned}
    r_{j+1/2}^R(t) &=u_{j+1/2}^R(t)+\dfrac 1 \lambda v_{j+1/2}^R(t)\\
    &= u_{j+1}^n  + \dfrac{1}{\lambda} \left( (v_{j+1}^n -
      g(u_{j+1}^n)) e^{-t/\varepsilon} + g(u_{j+1}^n) \right)\\
    &= u_{j+1}^n (1-e^{- t/\varepsilon}) + e^{-
      t/\varepsilon} (u_{j+1}^n +\dfrac{1}{\lambda} v_{j+1}^n) +
    \dfrac{1}{\lambda} (1-e^{- t/\varepsilon}) g(u_{j+1}^n)\\
    &= (1-e^{- t/\varepsilon}) h_+(u_{j+1}^n) + e^{-
      t/\varepsilon} r_{j+1}^n,
  \end{aligned}
\end{equation*}
  where we used the definition of $ r_{j+1}^n$ and 
  of $h_+(u)$ (see Section~\ref{sec:jin-xin-model}, Item~\ref{it:JX-hpm}).
  Since $u_{j+1}^n\in K$, we have $h_+(u_{j+1}^n)\in K_+$, and $r_{j+1}^n\in K_+$ as well. Hence $r_{j+1/2}^R(t)$ is a convex combination
  of elements of the convex set $K_+$ and thus belongs to $K_+$. The same arguments
  yield $r_{j\pm 1/2}^L(t)\in K_+$ and $s_{j\pm1/2}^{R,L}(t)\in
  K_-$ for all $j\in \mathbb Z$ and $t>0$.
  
 We now consider the numerical scheme \eqref{eq:JX-stag}. At time $t^{n+\demi}$, it
  rewrites
  \begin{align*}
    u_{j-\demi}^{n+\demi}&= \dfrac{1}{2}\left( u_{j-\demi}^L(\Delta t/2)
                           + u_{j-\demi}^R(\Delta t/2)  \right)
                           - \dfrac{\Delta t}{2 \Delta x}\left(
                           v_{j-\demi}^R(\Delta t/2) -
                           v_{j-\demi}^L(\Delta t/2)\right).
  \end{align*}
  Adding and subtracting $\lambda u_{j-\demi}^L(\Delta t/2)$ and $\lambda u_{j-\demi}^R(\Delta t/2)$ in the second term gives
  \begin{align*}
    u_{j-\demi}^{n+\demi}&= \left( \dfrac{1}{2} - \dfrac{\lambda
                           \Delta t}{2\Delta x}\right)
                           u_{j-\demi}^L(\Delta t/2)
                           + \left( \dfrac{1}{2} - \dfrac{\lambda
                           \Delta t}{2\Delta x}\right)
                           u_{j-\demi}^R(\Delta t/2)\\
                           &+ \dfrac{\lambda \Delta t}{\Delta x}
                           \left( \dfrac 1 2 s_{j-\demi}(\Delta t/2)
                           + \dfrac 1 2 r_{j-\demi}(\Delta t/2)\right).
  \end{align*}
  The first two terms belong to $K$ and the coefficients are positive
  under the CFL condition \eqref{CFLstaggered}. 

  By the previous result, $s_{j-\demi}(\Delta t/2)\in K_-$ and
  $r_{j-\demi}(\Delta t/2)\in K_+$. Since $\dfrac 1 2 K_-+ \dfrac 1 2
  K_+=K$, the weighted sum of the  two
  last terms belong to $K$. Therefore $u_{j-\demi}^{n+\demi}$ is a convex
  combination of elements of the convex set $K$ which concludes the proof.
\end{proof}

\section{An approximate Riemann solver accounting for the source term}
\label{sec:ARS}

In this Section, we propose an approximate Riemann solver which takes
into account the source term. We follow the methodology provided in
\cite{berthon2016well}, originally developed for mixed
hyperbolic/parabolic system of partial differential equations with a
source term involving spatial derivatives.
Introducing an approximate Riemann solver, the final scheme is shown to
be well balancing, capturing steady-state equilibria. However,
the presence of the source term introduces challenges in accurately
computing the mean value of the exact Riemann solution of the
relaxation system \eqref{eq:HSR2}
we are interested in.

\subsection{Definition of the scheme}
\label{sec:def-ARS}
To derive the numerical scheme, we introduce an approximate Riemann
solver in the sense of Harten, Lax and van Leer
\cite{harten1983upstream}.
An approximate Riemann solver $\tilde{\mathbf W}(x/t; \mathbf W_\ell,
\mathbf W_r)$ is a self similar function that reproduces the exact
solution  ${\mathcal W}_{\mathcal R}(x,t; \mathbf W_\ell, \mathbf W_r)$
of the Riemann problem of \eqref{eq:HSR} with an initial data
\begin{equation*}
    \textbf{W}(0, x)= \begin{cases}
        \textbf{W}_\ell \quad \text{if} \quad x < 0, \\
        \textbf{W}_r \quad \text{if} \quad x > 0,
    \end{cases}
\end{equation*}
where $\textbf{W}_\ell$ and $\textbf{W}_r$ are two given constant states. 
Here we consider an approximate Riemann solver with three constant
states separated by speeds \(\lambda_\ell<0<\lambda_r\), namely
\begin{equation}
\label{eq:ARS}
\widetilde{\textbf{W}}(x/t,\textbf{W}_\ell,\textbf{W}_r)=\begin{cases}
     \textbf{W}_\ell \quad\text{if}\quad \frac{x}{ t} < \lambda_\ell,\\
   \textbf{W}^* \quad\text{if}\quad  \lambda_\ell < \frac{x}{ t} < \lambda_r, \\
    \textbf{W}_r \quad\text{if}\quad \frac{x}{ t} > \lambda_r,
\end{cases}
\end{equation}
where $\lambda_{\ell,r}$ are chosen large enough to ensure robustness
\cite{harten1983upstream}.
In order to determine the intermediate state, the integral consistency
condition \cite{harten1983upstream} must be fulfilled, in the sense that
$\widetilde{\textbf{W}}$
must satisfy:
\begin{equation}
    \label{eq:ConsistencyCondition}
    \dfrac{1}{\Delta x} \int_{-\Delta x/2}^{\Delta
      x/2}\widetilde{\textbf{W}}(x/\Delta t; \mathbf W_\ell,
    \mathbf W_r) \mathrm d x
    = \dfrac{1}{\Delta x}  \int_{-\Delta x/2}^{\Delta x/2}{\mathcal
      W}_{\mathcal R}(x,\Delta t; \mathbf W_\ell, \mathbf W_r) \mathrm d
    x.
\end{equation}
According to \eqref{eq:ARS}, the left-hand-side of
\eqref{eq:ConsistencyCondition} reads
\begin{equation}
    \label{eq:ARS-1}
    \begin{aligned}
    \dfrac{1}{\Delta x} \int_{-\Delta x/2}^{\Delta
      x/2}\widetilde{\textbf{W}}(x/\Delta t; \mathbf W_\ell,
    \mathbf W_r) \mathrm d x
    &= \dfrac{1}{2}(\mathbf W_\ell+\mathbf W_r) \\
    &+ \dfrac{\Delta t}{\Delta x} (\lambda_\ell \mathbf W_\ell -
    \lambda_r \mathbf W_r)
    + \dfrac{\Delta t}{\Delta x} (\lambda_r-\lambda_\ell) \mathbf W^*.
    \end{aligned}
\end{equation}

The objective is now to provide an accurate evaluation of the average
of the exact Riemann solver ${\mathcal W}_{\mathcal R}(x, t; \mathbf
W_\ell, \mathbf W_r) $.
A closed-form evaluation
is out of reach because of the relaxation source term, so in \eqref{eq:ConsistencyCondition}
we replace the exact solution by a suitable approximation.
To compute the right-hand side of \eqref{eq:ConsistencyCondition}, we integrate
\eqref{eq:HSR} over the space-time domain $(-\tfrac{\Delta x}{2},\,\tfrac{\Delta x}{2})\times(0,\Delta t)$ to get
\begin{equation}
\label{eq:exact-RS}
\begin{aligned}
&\dfrac{1}{\Delta x}\int_{\frac{-\Delta x}{2}}^{\frac{\Delta
    x}{2}}{\mathcal W}_{\mathcal R}(x, \Delta t ; \mathbf W_\ell, \mathbf
W_r)\mathrm d x\\
& =\dfrac{1}{2}(\mathbf W_\ell +\mathbf W_r)-\dfrac{1}{\Delta
  x}\int_0^{\Delta t}\mathbf{f}({\mathcal W}_{\mathcal
  R}(\dfrac{\Delta x}{2},t; \mathbf W_\ell, \mathbf W_r))\mathrm d t\\
&+\dfrac{1}{\Delta x}\int_0^{\Delta t}
\mathbf{f}({\mathcal W}_{\mathcal R}(-\dfrac{\Delta x}{2},t; \mathbf
W_\ell, \mathbf W_r))\mathrm d t\\
&+\dfrac{1}{\varepsilon}\frac{1}{\Delta x}\int_0^{\Delta
  t}\int_{\frac{-\Delta x}{2}}^{\frac{\Delta
    x}{2}}\mathbf{R}({\mathcal W}_{\mathcal R}(x, t ; \mathbf
W_\ell, \mathbf W_r))\mathrm d x \mathrm d t.
\end{aligned}
\end{equation}
Because of the source term, we may fear that 
\begin{equation}
    {\mathcal W}_{\mathcal R}(-\Delta x/2,  t ; \mathbf W_\ell,
    \mathbf W_r)) \neq \mathbf W_\ell,
    \quad
    {\mathcal W}_{\mathcal R}(\Delta x/2,  t  ; \mathbf W_\ell,
    \mathbf W_r)) \neq \mathbf W_r.
\end{equation}
Therefore we adopt the following approximations
\begin{equation}
    \label{eq:ARS-flux}
    \begin{aligned}
    &\int_0^{\Delta t}\mathbf{f}({\mathcal W}_{\mathcal
      R}(-\Delta x/2,t; \mathbf W_\ell, \mathbf W_r) \simeq
    \Delta t \mathbf f(\mathbf W^L(\Delta t)),\\
    &\int_0^{\Delta t}\mathbf{f}({\mathcal W}_{\mathcal R}
    (\Delta x/2,t; \mathbf W_\ell, \mathbf W_r)
    \simeq \Delta t \mathbf f(\mathbf W^R(\Delta t)),
    \end{aligned}
\end{equation}
where the states $\mathbf W^L(\Delta t)$ and $\mathbf W^R(\Delta t)$
are solutions to the following ODE systems
  \begin{equation}
  \label{eq:ST-int}
    \begin{cases}
      \dfrac{\mathrm d}{\mathrm dt}\mathbf{W}^L(t) =
    \frac{1}{\epsilon} \mathbf{R}(\mathbf{W}^L(t)),\\
    \mathbf{W}^L(0) = \mathbf{W}_\ell,
  \end{cases}
  \quad
  \begin{cases}
    \dfrac{\mathrm d}{\mathrm dt}\mathbf{W}^R(t) =
    \frac{1}{\epsilon} \mathbf{R}(\mathbf{W}^R(t)), \text{ for } t>0,\\
    \mathbf{W}^R(0) = \mathbf{W}_r.
  \end{cases}
\end{equation}
Here we make use of the approximation
by Béreux and Sainsaulieu \cite{bereux1997roe}, previously used in the
staggered scheme of Section \ref{sec:staggered-scheme}.
The source term is incorporated into the flux approximation, which is then used in the construction of the approximate Riemann solver.

Concerning the source term, we substitute the source term integral by
a consistent approximation
\begin{equation}
    \label{eq:ARS-source}
    \dfrac{1}{\Delta x}\int_{\frac{-\Delta x}{2}}^{\frac{\Delta
        x}{2}}\mathbf{R}({\mathcal W}_{\mathcal R}(x, \Delta t ;
    \mathbf W_\ell, \mathbf W_r))\mathrm d x
    \simeq \{\mathbf R(\mathcal{W}_{\mathcal{R}})\}(\Delta x, \Delta
    t; \mathbf W_\ell, \mathbf W_r).
\end{equation}
The definition of the solver depends strongly on the structure of the source term.
Here, we propose a method adapted to systems of type \eqref{eq:HSR2} with source terms of
the form \eqref{eq:ST-0}.
The source term approximation
$\{\mathbf R(\mathcal{W}_{\mathcal{R}})\}$ is then given by
\begin{equation}
    \label{eq:ARS-source2}
    \begin{aligned}
    \dfrac{1}{\Delta x}\int_{\frac{-\Delta x}{2}}^{\frac{\Delta
        x}{2}}\mathbf{R}({\mathcal W}_{\mathcal R}(x, \Delta t ;
    \mathbf W_\ell, \mathbf W_r))\mathrm d x
    &\simeq \{\mathbf Q\}(\Delta x, \Delta t; \mathbf W_\ell, \mathbf W_r)\\
    &- \dfrac{1}{\Delta x}\int_{\frac{-\Delta x}{2}}^{\frac{\Delta
        x}{2}} {\mathcal W}_{\mathcal R}^{(2)}(x, \Delta t ; \mathbf
    W_\ell, \mathbf W_r)dx,
    \end{aligned}
\end{equation}
where ${\mathcal W}_{\mathcal R}^{(2)}(x, \Delta t ; \mathbf W_\ell,
\mathbf W_r)$
denotes the last $n-k$ components of the exact Riemann solver.
The term $\{\mathbf Q\}(\Delta x, \Delta t; \mathbf W_\ell, \mathbf
W_r)$ refers to the actual source term approximation and it will be defined later on.
For the sake of readability, this term is denoted $\{\mathbf Q\}_{\ell,r}$.

Using the approximations \eqref{eq:ARS-flux} and
\eqref{eq:ARS-source2}, we can propose an approximation of
the exact Riemann solution, which is denoted $\tilde{\mathcal
  W}_{\mathcal R}(x,  t ; \mathbf W_\ell, \mathbf W_r)$.
Since the source term acts only on the last $n-k$ components of the
vector $\tilde{\mathcal W}_{\mathcal R}$, we split the contributions
and write
$\tilde{\mathcal W}_{\mathcal R} = (\tilde{\mathcal W}_{\mathcal
  R}^{(1)}, \tilde{\mathcal W}_{\mathcal R}^{(2)})$.

\begin{Lem}
\label{lem:ERS}
Let us assume that the approximation $\{\mathbf Q\}_{\ell,r}$ does not depend on its second argument and that \eqref{condition jacob} holds.
Then the approximation of the exact Riemann solver is defined by

\begin{equation}
    \begin{aligned}
    \label{eq:W1}
    \dfrac{1}{\Delta x}\int_{\frac{-\Delta x}{2}}^{\frac{\Delta
        x}{2}}\tilde{\mathcal W}_{\mathcal R}^{(1)}(x,  t ; \mathbf
    W_\ell, \mathbf W_r) \mathrm d x &=
    \dfrac{1}{2}(\mathbf W^{(1)}_\ell + \mathbf W^{(1)}_r) \\
    &- \dfrac{\Delta t}{\Delta x}(\mathbf f^{(1)}
    (\mathbf W^R( \Delta t)) - \mathbf f^{(1)}(\mathbf W^L(\Delta
    t))),
    \end{aligned}   
\end{equation}
\begin{equation}
    \begin{aligned}
    \label{eq:W2}
        \dfrac{1}{\Delta x}\int_{\frac{-\Delta x}{2}}^{\frac{\Delta
            x}{2}}\tilde{\mathcal W}_{\mathcal R}^{(2)}(x,  t ;
        \mathbf W_\ell, \mathbf W_r) \mathrm d x &=
    \dfrac{1}{2}(\mathbf W^{(2)}_\ell + \mathbf W^{(2)}_r) e^{-\Delta
      t/\varepsilon}\\
    &- \dfrac{\varepsilon}{\Delta x}(1-e^{-\Delta
      t/\varepsilon})(\mathbf f^{(2)}(\mathbf W^R(\Delta t)) - \mathbf
    f^{(2)}(\mathbf W^L(\Delta t)))\\
    &+(1-e^{-\Delta t/\varepsilon}) \{\mathbf Q\}_{\ell,r}.
    \end{aligned}   
\end{equation}
\end{Lem}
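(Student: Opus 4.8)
The plan is to derive both identities from the exact integral balance \eqref{eq:exact-RS}, but written at a generic time $\tau\in(0,\Delta t)$ rather than only at $\tau=\Delta t$, so that the exponential weights appearing in \eqref{eq:W2} come out of an explicit time integration of a linear ODE for the spatial average of the second block of unknowns.

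For \eqref{eq:W1} the argument is immediate. Applying $M_1$ to \eqref{eq:HSR} and integrating over $(-\Delta x/2,\Delta x/2)\times(0,\Delta t)$ annihilates the source term, since $M_1\mathbf R\equiv 0$ by \eqref{eq:M1}, and leaves the conservative balance \eqref{eq:HSR1} in integral form. Substituting the flux approximations \eqref{eq:ARS-flux}, i.e. $\int_0^{\Delta t}\mathbf f^{(1)}(\mathcal W_{\mathcal R}(\mp\Delta x/2,t))\,\mathrm d t\simeq\Delta t\,\mathbf f^{(1)}(\mathbf W^{L,R}(\Delta t))$, gives \eqref{eq:W1} directly; neither \eqref{condition jacob} nor the special form \eqref{eq:ST-0} of the source enters here.

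For \eqref{eq:W2} I would first write the space‑averaged form of the second equation of \eqref{eq:HSR2}: setting $\bar{\mathbf w}(\tau):=\frac{1}{\Delta x}\int_{-\Delta x/2}^{\Delta x/2}\mathcal W^{(2)}_{\mathcal R}(x,\tau)\,\mathrm d x$ and using $\mathbf R^{(2)}=\mathbf Q(\mathbf W^{(1)})-\mathbf W^{(2)}$ from \eqref{eq:ST-0}, one gets $\bar{\mathbf w}'(\tau)=-\frac{1}{\Delta x}\big(\mathbf f^{(2)}(\mathcal W_{\mathcal R}(\Delta x/2,\tau))-\mathbf f^{(2)}(\mathcal W_{\mathcal R}(-\Delta x/2,\tau))\big)+\frac1\varepsilon\big(\overline{\mathbf Q}(\tau)-\bar{\mathbf w}(\tau)\big)$, with $\overline{\mathbf Q}(\tau)$ the space average of $\mathbf Q(\mathcal W^{(1)}_{\mathcal R}(\cdot,\tau))$. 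Then I would insert the two modelling approximations that define $\tilde{\mathcal W}_{\mathcal R}$: the boundary traces are replaced by the source‑ODE states $\mathbf W^{L,R}(\tau)$ solving \eqref{eq:ST-int} (in the spirit of \eqref{eq:ARS-flux}), and $\overline{\mathbf Q}(\tau)$ is replaced by $\{\mathbf Q\}_{\ell,r}$ (as in \eqref{eq:ARS-source2}). The crucial simplification is that \eqref{condition jacob} forces $t\mapsto\mathbf f^{(2)}(\mathbf W^{L,R}(t))$ to be constant along the flow \eqref{eq:ST-int}, since $\frac{\mathrm d}{\mathrm d t}\mathbf f^{(2)}(\mathbf W^{R}(t))=\frac1\varepsilon\nabla\mathbf f^{(2)}(\mathbf W^R(t))\mathbf R(\mathbf W^R(t))=0$; hence the flux difference equals the constant vector $\mathbf f^{(2)}(\mathbf W^R(\Delta t))-\mathbf f^{(2)}(\mathbf W^L(\Delta t))$. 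Together with the hypothesis that $\{\mathbf Q\}_{\ell,r}$ does not depend on its second argument (namely $\Delta t$), so that the forcing does not vary with $\tau$, this turns the evolution of $\bar{\mathbf w}$ into a linear ODE with constant coefficient $1/\varepsilon$ and constant right‑hand side, with initial value $\bar{\mathbf w}(0)=\frac12(\mathbf W^{(2)}_\ell+\mathbf W^{(2)}_r)$ read off the Riemann data.

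The remaining step is to integrate this ODE with the integrating factor $e^{\tau/\varepsilon}$ and evaluate at $\tau=\Delta t$; collecting terms produces the $e^{-\Delta t/\varepsilon}$ factor on the averaged data, the $\frac{\varepsilon}{\Delta x}(1-e^{-\Delta t/\varepsilon})$ factor on the flux difference, and the $(1-e^{-\Delta t/\varepsilon})$ factor on $\{\mathbf Q\}_{\ell,r}$, which is exactly \eqref{eq:W2}. I expect the main obstacle to be conceptual bookkeeping rather than computation: one must state precisely which object $\tilde{\mathcal W}_{\mathcal R}$ denotes — the self‑similar profile whose spatial mean obeys the approximate balance above — and then check that \eqref{condition jacob} and the $\Delta t$‑independence of $\{\mathbf Q\}_{\ell,r}$ are precisely the two structural facts that make the time integration closed‑form; once those are in place, solving the linear ODE and tracking the signs are routine.
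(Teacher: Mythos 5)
Your proposal is correct and follows essentially the same route as the paper: the first block is immediate because $M_1\mathbf R\equiv0$, and the second block reduces to a linear constant-coefficient ODE for the spatial average $\mathcal F(t)$ once \eqref{condition jacob} freezes $\mathbf f^{(2)}(\mathbf W^{L,R}(t))$ and the $\Delta t$-independence of $\{\mathbf Q\}_{\ell,r}$ freezes the forcing, solved with the integrating factor $e^{t/\varepsilon}$ from the initial value $\tfrac12(\mathbf W^{(2)}_\ell+\mathbf W^{(2)}_r)$. The only cosmetic difference is that the paper obtains this ODE by differentiating the time-integrated balance \eqref{eq:F(Delta t)} with respect to $\Delta t$, whereas you average the PDE in space at a generic time; the two derivations are equivalent.
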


\begin{proof}
For the $k$ first components, one easily observes that the exact Riemann
solver satisfies \eqref{eq:W1} since the source term has no contribution.
We now focus on the last $n-k$ components. We introduce the smooth function
\begin{equation}
\label{eq:approx solution}
\mathcal{F}( t)=\dfrac{1}{\Delta x}\int_{\frac{-\Delta x}{2}}^{\frac{\Delta x}{2}}
\tilde{\mathcal W}_{\mathcal R}^{(2)}(x,  t ; \mathbf W_\ell, \mathbf
W_r) \mathrm d x.
\end{equation}
Plugging the notation \eqref{eq:approx solution} into 
\eqref{eq:exact-RS} and using the
approximations \eqref{eq:ARS-flux} and \eqref{eq:ARS-source2}  gives
\begin{equation}
\label{eq:F(Delta t)}
    \begin{aligned}
  \mathcal{F}(\Delta t)&=
  \dfrac{1}{2}(\mathbf{W}^{(2)}_\ell+\mathbf{W}_r^{(2)})-\dfrac{\Delta
    t}{\Delta x}\bigg(\mathbf{f}^{(2)}(\mathbf{W}^R(\Delta
  t))-\mathbf{f}^{(2)}(\mathbf{W}^L(\Delta t))\bigg)\\
  &+\dfrac{1}{\varepsilon}\Delta t\{\mathbf Q\}_{\ell,r}-
  \frac{1}{\varepsilon}\int_0^{\Delta t}\mathcal{F}(t)\mathrm d t.
      \end{aligned}
\end{equation}
Since $\{\mathbf Q\}_{\ell,r}$ does not depend on $\Delta t$, and
assuming \eqref{condition jacob}, the derivative reads
\begin{equation}
\label{eq:Fprime}
\mathcal{F}'(\Delta t)+\frac{1}{\varepsilon}\mathcal{F}(\Delta
t)=-\dfrac{1}{\Delta x}\bigg(\mathbf{f}^{(2)}(\mathbf{W}^R(\Delta
t))-\mathbf{f}^{(2)}(\mathbf{W}^L(\Delta t) )\bigg)
+\dfrac{1}{\varepsilon}\{\mathbf Q\}_{\ell,r}.
\end{equation}
Moreover, by \eqref{condition jacob}, $\frac{d}{dt}\,\mathbf f^{(2)}(\mathbf W^{R,L}(t))=0$,
hence $\mathbf{f}^{(2)}(\mathbf{W}^{R,L}(t))\equiv \mathbf{f}^{(2)}(\mathbf{W}_{r,\ell})$
for $t\in[0,\Delta t]$. Solving \eqref{eq:Fprime} with the initial condition
$\mathcal F(0)=\tfrac{1}{2}\big(\mathbf W_\ell^{(2)}+\mathbf W_r^{(2)}\big)$ gives
 \begin{equation}
 \label{eq:sol_F}
 \begin{aligned}
 \mathcal{F}(\Delta
 t)=&\Bigg(\frac{\mathbf{W}^{(2)}_\ell+\mathbf{W}^{(2)}_r}{2}+\dfrac{\varepsilon}{\Delta
   x}\bigg(\mathbf{f}^{(2)}(\mathbf{W}^R(\Delta
 t))-\mathbf{f}^{(2)}(\mathbf{W}^L(\Delta t))\bigg)\Bigg.\\
 &\qquad \qquad \qquad \Bigg.-\{\mathbf Q\}_{\ell,r}\Bigg)e^{\frac{-\Delta t}{\varepsilon}}\\
 &-\dfrac{\varepsilon}{\Delta
   x}\bigg(\mathbf{f}^{(2)}(\mathbf{W}^R(\Delta t))-\mathbf{f}^{(2)}(\mathbf{W}^L(\Delta t))\bigg)+\{\mathbf
 Q\}_{\ell,r},
 \end{aligned}
  \end{equation}
This is precisely the expression stated in \eqref{eq:W2}.
\end{proof}

\begin{Rem}
If the model does not satisfy \eqref{condition jacob}, then
$\mathbf f^{(2)}(\mathbf W^{L,R}(t))$ is not constant during the source step.
In that case, we directly use \eqref{eq:W2} to approximate the exact Riemann solution.
\end{Rem}

With the approximation of the exact Riemann solution $\tilde{\mathcal
  W}_{\mathcal R}(x,  t ; \mathbf W_\ell, \mathbf W_r)$,
given in Lemma \ref{lem:ERS},
we now construct the approximate Riemann solver.
Instead of enforcing the classical integral consistency condition
\eqref{eq:ConsistencyCondition}, we impose that
\begin{equation}
    \label{eq:ApproxConsistencyCondidiotn}
\dfrac{1}{\Delta x}\int_{\frac{-\Delta x}{2}}^{\frac{\Delta x}{2}}
\tilde{\textbf{W}} \left(x/\Delta t; \textbf{W}_\ell,
  \textbf{W}_r\right) \mathrm d x = \frac{1}{\Delta
  x}\int_{\frac{-\Delta x}{2}}^{\frac{\Delta x}{2}}
\tilde{\mathcal{W}}_{\mathcal{R}} \left( x, \Delta t ; \textbf{W}_\ell,
  \textbf{W}_r\right) \mathrm d x.
\end{equation}
By identification, we determine the intermediate state $\mathbf W^*$
of the approximate Riemann solver which reads
\begin{eqnarray}
  \label{eq:W1*}
\mathbf W^{(1),*} &=&-\dfrac{1}{\lambda_r-\lambda_\ell}\bigg(\mathbf
f^{(1)}(\mathbf W^R(\Delta t)) - \mathbf f^{(1)}(\mathbf W^L(\Delta
t))\bigg)\\
    &\quad&+ \dfrac{1}{\lambda_r-\lambda_\ell}\bigg(\lambda_r \mathbf W^{(1)}_r
    - \lambda_\ell \mathbf W^{(1)}_\ell\bigg),\nonumber\\
%
\label{eq:W2*}
\mathbf W^{(2),*} &=&\dfrac{\Delta x }{\Delta t (\lambda_r -
  \lambda_\ell)}\bigg(e^{-\Delta t/\varepsilon}- 1\bigg)
\frac{\mathbf{W}^{(2)}_\ell+\mathbf{W}^{(2)}_r}{2}\\
&\quad&+ \bigg(e^{-\Delta t/\varepsilon}- 1\bigg)\dfrac{\varepsilon }{\Delta
  t (\lambda_r - \lambda_\ell)}\bigg(\mathbf f^{(2)}(\mathbf W^R(\Delta
t)) - \mathbf f^{(2)}(\mathbf W^L(\Delta t))\bigg) \nonumber\\
&\quad&- \dfrac{(\lambda_\ell\mathbf W^{(2)}_\ell
  - \lambda_r \mathbf W^{(2)}_r)}{\lambda_r-\lambda_\ell} 
- \bigg(e^{-\Delta t/\varepsilon}- 1\bigg)
\dfrac{\Delta x }{\Delta t (\lambda_r - \lambda_\ell)} \{\mathbf
Q\}_{\ell,r}. \nonumber
\end{eqnarray}
We now need to specify $\{\mathbf Q\}_{\ell,r}$
in order to guarantee the asymptotic preserving property as
$\varepsilon$ tends to 0.

\begin{figure}[h!]
    \centering
    \includegraphics[width=1\textwidth]{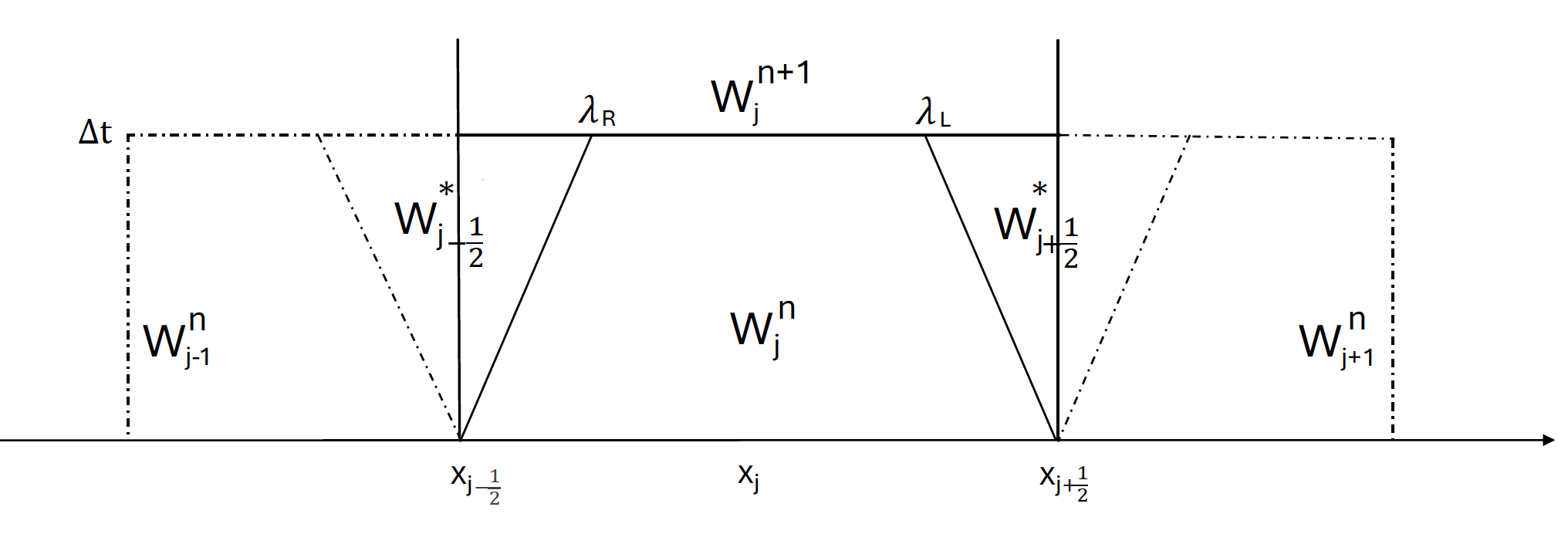}
    \caption{Juxtaposition of the approximate Riemann solvers defining
    the sequence $\mathbf W_j^{n+1}$, $j\in \mathbb Z$, using the
    intermediate states $\mathbf W_{j\pm\demi}^*$.}\label{fig:godunov2}
\end{figure}
Equipped with the approximate Riemann solver $\tilde{\mathbf{W}}$,
defined by \eqref{eq:ARS}, \eqref{eq:W1*} and \eqref{eq:W2*} ,
we derive a Godunov-type finite-volume scheme.
Figure \ref{fig:godunov2} illustrates the standard configuration, 
where the mesh and time-discretization notations are those introduced at the beginning of Section~\ref{sec:staggered-scheme}.

The initial datum is constant within each cell 
\begin{equation}
    \label{eq:wi0}
    \mathbf W_j^0 = \dfrac{1}{\Delta x} \int_{-\Delta x/2}^{\Delta
      x/2}\mathbf{W}(0,x)\mathrm d x.
\end{equation}
The time step is constrained by the CFL condition: 
\begin{equation}\label{CFL:ARS}
    \frac{\Delta t}{\Delta x}
    \underset{j\in \mathcal{Z}}{\max}
    \bigg(|\lambda_{\ell,j+1/2}|,|\lambda_{r,j+1/2}|\bigg) \leq \frac{1}{2},
\end{equation}
where $\lambda_{\ell,r,j+1/2}$ represent the left and right wave
velocities associated with the approximate Riemann solver at the
interface $x_{j+1/2}$.

The updated state
$\mathbf{W}_{j}^{n+1}=(\mathbf{W}_{j}^{(1),n+1},\mathbf{W}_{j}^{(2),n+1})$
is the projection over piecewise constant function, namely
\begin{equation}
\begin{aligned}
\label{eq:SAR-3}
 \mathbf{W}_{j}^{n+1}&=\dfrac{1}{\Delta x}\int_{\frac{-\Delta
     x}{2}}^0\tilde{\textbf{W}}(x/\Delta
   t;\mathbf{W}_{j-1}^n,\mathbf{W}_{j}^n)\mathrm d x+\dfrac{1}{\Delta
   x}\int_0^{\frac{\Delta x}{2}}\tilde{\textbf{W}}(x/\Delta
   t;\mathbf{W}_{j}^n,\mathbf{W}_{j+1}^n)\mathrm d x\\
 &=\mathbf{W}_j^n-\dfrac{\Delta t}{\Delta
   x}\left(\lambda_{r,j-1/2}(\mathbf{W}_j^n-\mathbf{W}_{j-1/2}^{*})-\lambda_{\ell,
     j+1/2}(\mathbf{W}_j^n-\mathbf{W}_{j+1/2}^{*})\right),
 \end{aligned}
\end{equation}
where the state $\mathbf{W}_{j+
  1/2}^{*}=(\mathbf{W}_{j+1/2}^{(1),*},\mathbf{W}_{j+1/2}^{(2),*}$)
denotes the intermediate state for the
approximate Riemann solver $\tilde{\mathbf W}\left( x/\Delta t;
  \mathbf W_j^n, \mathbf W_{j+1}^n\right)$, for $j\in \mathbb Z$.

Then, in the presence of a source term, the scheme is expressed in
terms of $\{\mathbf{Q}\}$. Taking the limit \(\varepsilon \to 0\) in \eqref{eq:SAR-3} therefore gives
\begin{equation}
  \label{eps_0}
\begin{aligned}
\mathbf{W}_{j}^{(2),n+1} &= \mathbf{W}_j^{(2),n} - \dfrac{\Delta t}{\Delta x} \Bigg[
\lambda_{r,j-1/2} \mathbf{W}_j^{(2),n}
+ \dfrac{\lambda_{r,j-1/2} \Delta x}{\Delta t [\lambda]_{j-1/2}}
\left(\dfrac{\mathbf{W}_j^{(2),n} +
    \mathbf{W}_{j-1}^{(2),n}}{2}\right)\\
&- \dfrac{\lambda_{r,j-1/2} \Delta x}{\Delta t [\lambda]_{j-1/2}}
\{\mathbf{Q}\}_{j-1,j}+
 \lambda_{r,j-1/2} \dfrac{\lambda_{\ell,j-1/2} \mathbf{W}_{j-1}^{(2),n} -
   \lambda_{r,j-1/2} \mathbf{W}_j^{(2),n}}{[\lambda]_{j-1/2}}\\
 &- \lambda_{\ell,j+1/2} \mathbf{W}_j^{(2),n} 
- \dfrac{\lambda_{\ell,j+1/2} \Delta x}{\Delta t [\lambda]_{j+1/2}}
\left(\dfrac{\mathbf{W}_j^{(2),n} +
    \mathbf{W}_{j+1}^{(2),n}}{2}\right) \\
&\quad + \dfrac{\lambda_{\ell,j+1/2} \Delta x}{\Delta t [\lambda]_{j+1/2}} \{\mathbf{Q}\}_{j,j+1}
- \lambda_{\ell,j+1/2} \dfrac{\lambda_{\ell,j+1/2}
  \mathbf{W}_j^{(2),n} - \lambda_{r,j+1/2}
  \mathbf{W}_{j+1}^{(2),n}}{[\lambda]_{j+1/2}}
\Bigg].
\end{aligned}
\end{equation}
where $[\lambda]_{j\pm1/2}=\lambda_{r,j\pm1/2} - \lambda_{\ell,j\pm1/2}$.
On the other hand, when $\varepsilon \to 0$, we expect
$\mathbf{R}(\mathbf{W}_j^{n+1}) = 0$, which implies that
$\mathbf{W}_j^{(2),n+1} = \mathbf{Q}(\mathbf{W}_j^{(1),n+1})$.
Assuming $\{\mathbf{Q}\}_{j-1,j}=\{\mathbf{Q}\}_{j,j+1}$
and  substituting $\mathbf{W}_j^{(2),n+1}$ with
$\mathbf{Q}(\mathbf{W}_j^{(1),n+1})$
in \eqref{eps_0}, we obtain the formulation for $\{\mathbf{Q}\}$:
\begin{equation}\label{eq:Qapprox}
 \begin{aligned}
   \{\mathbf{Q}\}&=\Bigg[\frac{1}{\lambda_{\ell,j+1/2}[\lambda]_{j-1/2}-\lambda_{r,j-1/2}[\lambda]_{j+1/2}}\Bigg]\\
   &\times\Bigg[[\lambda]_{j-1/2}[\lambda]_{j+1/2}\big(-\mathbf{Q}(\mathbf{W}^{(1),n+1}_j)+\mathbf{W}_j^{(2),n}\big)\\
 &-\frac{\Delta t}{\Delta
   x}\bigg(\lambda_{r,j-1/2}\lambda_{\ell,j-1/2}[\lambda]_{j+1/2}(\mathbf{W}^{(2)}_{j-1}-\mathbf{W}_j^{(2),n})\\
 &+\lambda_{\ell,j+1/2}\lambda_{r,j+1/2}[\lambda]_{j-1/2}(\mathbf{W}_{j+1}^{(2),n}-\mathbf{W}_j^{(2),n})\bigg)\\
 &-\lambda_{r,j-1/2}[\lambda]_{j+1/2}\bigg(\frac{\mathbf{W}^{(2),n}_{j-1}+\mathbf{W}_j^{(2),n}}{2}\bigg)+
 \lambda_{\ell,j+1/2}[\lambda]_{j-1/2}\bigg(\frac{\mathbf{W}_{j+1}^{(2),n}+\mathbf{W}_j^{(2),n}}{2}\bigg)\Bigg].
 \end{aligned}   
\end{equation}
If $\lambda_{r,j-1/2}=\lambda_{r,j+1/2}=\lambda_r$ and
$\lambda_{\ell,j-1/2}=\lambda_{\ell,j+1/2}=\lambda_\ell$,
then it yields
\begin{equation}
  \label{eq:Qapproxsimplify}
  \begin{aligned}
    \{\mathbf{Q}\} &= \mathbf{Q}(\mathbf{W}_j^{(1),n+1}) -
    \mathbf{W}_j^{(2),n}\\
    &\quad + \dfrac{\lambda_r }{\lambda_r -\lambda_\ell}\left(
      \dfrac{\mathbf{W}_{j-1}^{(2),n}+ \mathbf{W}_j^{(2),n}}{2}\right)
    - \dfrac{\lambda_\ell }{\lambda_r -\lambda_\ell}\left(
      \dfrac{\mathbf{W}_{j+1}^{(2),n}+
        \mathbf{W}_j^{(2),n}}{2}\right)\\
       &\quad +\dfrac{\Delta t}{\Delta
         x}\dfrac{\lambda_r\lambda_\ell}{\lambda_r-\lambda_\ell}(\mathbf{W}_{j-1}^{(2),n}-2\mathbf
      {W}_j^{(2),n}+ \mathbf{W}_{j+1}^{(2),n}).
  \end{aligned}
\end{equation}
It follows that the update state simplifies to 
\begin{equation}\label{eq:W1np1}
  \mathbf{W}_j^{(1),n+1}=\mathbf{W}_j^{(1),n}-
  \frac{\Delta t}{\Delta x}\bigg(\mathbf{F}_{j+1/2}^{(1)}-\mathbf{F}_{j-1/2}^{(1)}\bigg),
\end{equation}
with the numerical flux
\begin{equation}
\label{eq:F1}
\begin{aligned}
  \mathbf{F}_{j+1/2}^{(1)}&=\frac{\lambda_r\lambda_\ell}{\lambda_r-\lambda_\ell}
  \bigg(\mathbf{W}_{j+1}^{(1),n}-\mathbf{W}_{j}^{(1),n}\bigg)\\
&-\frac{1}{\lambda_r-\lambda_\ell}\bigg(\lambda_\ell\mathbf{f}^{(1)}(\mathbf{W}_{j+1/2}^R(\Delta
t))-\lambda_r\mathbf{f}^{(1)}(\mathbf{W}_{j+1/2}^{L}(\Delta
t)\bigg),
\end{aligned}
\end{equation}
with
\begin{equation}
    \label{eq:ST-int-i}
    \begin{cases}
        \dfrac{\mathrm d}{\mathrm d t}\mathbf{W}_{j+1/2}^R(t) = \dfrac{1}{\varepsilon}\mathbf R( \mathbf{W}_{j+1/2}^R(t)),\\
        \mathbf{W}_{j+1/2}^R(0) = \mathbf{W}_{j+1}^n,
    \end{cases}
\quad
    \begin{cases}
        \dfrac{\mathrm d}{\mathrm d t}\mathbf{W}_{j+1/2}^L(t) =
        \dfrac{1}{\varepsilon}\mathbf R( \mathbf{W}_{j+1/2}^L(t)),  \text{ for } t>0,\\
        \mathbf{W}_{j+1/2}^L(0) = \mathbf{W}_{j}^n,
    \end{cases}
\end{equation}
and
\begin{equation}
  \label{eq:W2np1}
  \mathbf{W}_j^{(2),n+1}=\mathbf{W}_j^{(2),n}
  -\frac{\Delta t}{\Delta
    x}\bigg(\mathbf{F}_{j+1/2}^{(2)}-\mathbf{F}_{j-1/2}^{(2)}\bigg)
  -\bigg(e^{-\Delta
    t/\varepsilon}-1\bigg)\bigg(\mathbf{Q}(\mathbf{W}_{j}^{(1),n+1})
  -\mathbf{W}_{j}^{(2),n}\bigg),
\end{equation}
with the numerical flux
\begin{equation}
\label{eq:F2}
\begin{aligned}
\mathbf{F}_{j+1/2}^{(2),n}&=\frac{e^{\frac{-\Delta
      t}{\varepsilon}}\lambda_r\lambda_\ell}{\lambda_r-\lambda_\ell}
\bigg(\mathbf{W}_{j+1}^{(2),n}-\mathbf{W}_{j}^{(2),n}\bigg)\\
&+\frac{\varepsilon(e^{-\Delta t/\varepsilon}-1)}{\Delta
  t(\lambda_r-\lambda_\ell)}
\bigg(\lambda_\ell\mathbf{f}^{(2)}(\mathbf{W}^R_{j+1/2}(\Delta t))-
\lambda_r\mathbf{f}^{(2)}(\mathbf{W}_{j+1/2}^L(\Delta t))\bigg).  
\end{aligned}
\end{equation}

\subsection{Properties of the Approximate Riemann solver}
\label{sec:prop-ARS}

As a direct consequence of the consistency
condition \eqref{eq:ApproxConsistencyCondidiotn},
the ARS is consistent with solutions of \eqref{eq:HSR2}.
Moreover, the associated Godunov scheme endowed with the asymptotic
correction
guarantees the scheme to be asymptotic preserving by construction.
When \(\varepsilon=0\), the scheme reduces to the HLL scheme applied
to the limit hyperbolic equilibrium model \eqref{eq:HE}. These
properties are summarized  in the following proposition.

\begin{Prop}{(Asymptotic preserving property)}
  Let the constant sequence of cell-averaged values
  $(\mathbf W_j^{(1),n}, \mathbf W_j^{(2),n})$ be known at time $t^n$,
  for $j\in \mathbb Z$.
  Under the CFL condition \eqref{CFL:ARS}, the scheme
\eqref{eq:W1np1}–\eqref{eq:W2np1} is asymptotic preserving, in the
sense that it is consistent with solutions of the hyperbolic model
\eqref{eq:HSR2} for all $\varepsilon>0$ and, in the limit
$\varepsilon\to 0$, it converges to the stable and consistent HLL
scheme \cite{harten1983upstream}
for the limit hyperbolic equilibrium model \eqref{eq:HE}.
\end{Prop}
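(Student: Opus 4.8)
The proof naturally splits into the two regimes asserted by the statement, and throughout I would stay within the structural framework \eqref{eq:HSR2}--\eqref{eq:ST-0} for which the relaxation ODE is solved explicitly by \eqref{solutionEDO} and Lemma~\ref{lem:ERS} furnishes the approximate Riemann solution $\tilde{\mathcal W}_{\mathcal R}$.

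\emph{Consistency for fixed $\varepsilon>0$.} This is essentially built into the construction. The intermediate state $\mathbf W^*$ is defined so that the approximate integral consistency relation \eqref{eq:ApproxConsistencyCondidiotn} holds, and its right-hand side $\tilde{\mathcal W}_{\mathcal R}$ is obtained from the exact space--time balance \eqref{eq:exact-RS} after replacing the boundary-flux integrals and the source integral by the first-order consistent quadratures \eqref{eq:ARS-flux} and \eqref{eq:ARS-source2}; hence the cell update \eqref{eq:SAR-3}, equivalently \eqref{eq:W1np1}--\eqref{eq:W2np1}, is a consistent discretization of \eqref{eq:HSR2}. To make this concrete I would evaluate the scheme on a spatially constant datum $\mathbf W_j^n=\mathbf W_{j+1}^n=\mathbf W$: the two ODE trajectories in \eqref{eq:ST-int-i} then coincide, the flux differences in \eqref{eq:W1np1}--\eqref{eq:W2np1} vanish identically, and the update reduces to $\mathbf W_j^{(1),n+1}=\mathbf W^{(1)}$, $\mathbf W_j^{(2),n+1}=e^{-\Delta t/\varepsilon}\mathbf W^{(2)}+(1-e^{-\Delta t/\varepsilon})\mathbf Q(\mathbf W^{(1)})$, which is exactly the spatially homogeneous solution of \eqref{eq:HSR2}--\eqref{eq:ST-0} at time $\Delta t$. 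Together with the fact that the HLL-type fluxes \eqref{eq:F1}, \eqref{eq:F2} satisfy $\mathbf F^{(k)}_{j+1/2}(\mathbf W,\mathbf W)=\mathbf f^{(k)}(\mathbf W)$ up to the harmless $O(\Delta t)$ shift induced by the ODE step (which cancels in the flux difference), this yields consistency with \eqref{eq:HSR2} for all $\varepsilon>0$.

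\emph{The limit $\varepsilon\to0$ at fixed $\Delta x,\Delta t$.} Here I would pass to the limit in each ingredient. Along \eqref{eq:ST-int-i} the conserved part is frozen, $\mathbf W^{(1),R}_{j+1/2}(t)\equiv\mathbf W^{(1),n}_{j+1}$, and by \eqref{solutionEDO} the relaxed part converges, $\mathbf W^{(2),R}_{j+1/2}(\Delta t)\to\mathbf Q(\mathbf W^{(1),n}_{j+1})$ (and similarly for the $L$ state), so $\mathbf W^{R,L}_{j+1/2}(\Delta t)$ tends to the equilibrium values $E(\mathbf W^{(1),n}_{j+1})$, resp.\ $E(\mathbf W^{(1),n}_{j})$. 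Since $e^{-\Delta t/\varepsilon}\to0$ and $\varepsilon e^{-\Delta t/\varepsilon}\to0$, the flux \eqref{eq:F1} converges to the standard HLL numerical flux associated with the equilibrium flux $\mathbf W^{(1)}\mapsto\mathbf f^{(1)}(\mathbf W^{(1)},\mathbf Q(\mathbf W^{(1)}))$, so \eqref{eq:W1np1} becomes exactly the HLL scheme for \eqref{eq:HE}, for which \eqref{CFL:ARS} is precisely the stability restriction. In \eqref{eq:F2} both contributions carry a vanishing prefactor, so $\mathbf F^{(2)}_{j+1/2}\to0$, while $-(e^{-\Delta t/\varepsilon}-1)\to1$; hence \eqref{eq:W2np1} collapses to $\mathbf W_j^{(2),n+1}=\mathbf Q(\mathbf W_j^{(1),n+1})$, i.e.\ the updated state lies on the equilibrium manifold \eqref{eq:Eq-ST}. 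The limit scheme is thus the stable, consistent HLL scheme for \eqref{eq:HE} supplemented by the equilibrium projection, which is the announced asymptotic-preserving property.

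The only genuinely delicate point, and the reason the computation \eqref{eps_0}--\eqref{eq:Qapproxsimplify} is needed, is this last collapse: a naive source-term quadrature would leave an $O(1)$ residual in the $(2)$-equation as $\varepsilon\to0$, and $\{\mathbf Q\}_{\ell,r}$ is chosen precisely by solving \eqref{eps_0} under the ansatz $\mathbf W_j^{(2),n+1}=\mathbf Q(\mathbf W_j^{(1),n+1})$ so that this residual is cancelled. Everything else reduces to bounded-coefficient limits and a direct identification with the HLL flux; the only bookkeeping to watch is that the fluxes depend on $\Delta t$ through the ODE step, which is harmless since the resulting perturbations either are $O(\Delta t)$ and cancel in flux differences or carry the vanishing prefactors above.
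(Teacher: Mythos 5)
Your proposal is correct and follows essentially the same route as the paper, which in fact leaves the proof implicit in the construction: consistency is attributed to the integral consistency relation \eqref{eq:ApproxConsistencyCondidiotn}, and the $\varepsilon\to0$ limit follows from the choice of $\{\mathbf Q\}$ in \eqref{eps_0}--\eqref{eq:Qapproxsimplify}, which forces $\mathbf W_j^{(2),n+1}=\mathbf Q(\mathbf W_j^{(1),n+1})$ while \eqref{eq:W1np1} reduces to the HLL scheme for \eqref{eq:HE}. Your explicit limit computations on the fluxes \eqref{eq:F1}--\eqref{eq:F2} and the check on spatially constant data simply make rigorous what the paper asserts "by construction".
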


In the case of the Jin and Xin model, the scheme \eqref{eq:W1np1}-\eqref{eq:F2} reads
\begin{equation}
  \label{eq:wnp1}
  \begin{aligned}
    u_j^{n+1}&=u_{j}^{n}-\frac{\Delta t}{\Delta
      x}\bigg(\frac{-\lambda}{2}(u_{j+1}^n-2u_j^n+u_{j-1}^n)+\frac{1}{2}\bigg((v_{j+1}^n-g(u_{j+1}^n))e^{\frac{-\Delta
        t}{\varepsilon}}+g(u_{j+1}^n)\\
             &-(v_{j-1}^n-g(u_{j-1}^n))e^{\frac{-\Delta
                 t}{\varepsilon}}-g(u_{j-1}^n)\bigg)\bigg),\\
    v_j^{n+1}&=v_j^n-\frac{\Delta t}{\Delta x}\bigg(\frac{-\lambda
      e^{\frac{-\Delta
          t}{\varepsilon}}}{2}(v_{j+1}^n-2v_j^n+v_{j-1}^n)-\frac{\lambda^2\varepsilon(e^{\frac{-\Delta
          t}{\varepsilon}}-1)}{2\Delta t}(u_{j+1}^n-u_{j-1}^n)\bigg)\\
             &\qquad-(e^{\frac{-\Delta t}{\varepsilon}}-1)(g(u_j^{n+1})-v_j^n).
           \end{aligned}
\end{equation}
In the limit $\varepsilon\to 0$, the scheme reads
\begin{equation*}
  \begin{aligned}
    u_j^{n+1} &= u_j^n - \dfrac{\Delta t}{\Delta x}\left( -
      \dfrac{\lambda}{2}(u_{j+1}^n -2u_j^n+u_{j-1}^n)+ \dfrac 1 2
      (g(u_{j+1}^n)-g(u_{j-1}^n)\right),\\
    v_j^{n+1} &=g(u_j^{n+1}).
  \end{aligned}
\end{equation*}
It corresponds to the Lax-Friedrichs or Rusanov scheme applied to the
equilibrium equation \eqref{eq:SCL} with the CFL condition
corresponding to the wave speeds of the relaxed system \eqref{eq:JX}.

Moreover, within the framework of the Jin and Xin model, it is possible to prove
that the approximate Riemann solver ensures the invariance of the set of admissible
states $K$ for the equilibrium model \eqref{eq:SCL}, see property
\eqref{it:JX-K} in Section \ref{sec:jin-xin-model}.
\begin{Prop}
  Under the subcharacteristic
  condition \eqref{eq:subcar} and the CFL condition \eqref{CFL:ARS},
  if $(u_j^n, v_j^n) \in D_{K}^{\lambda}$ for all $j \in \mathbb{Z}$,
  then $u_j^{n+1}\in K$ for all $j \in \mathbb{Z}$.
\end{Prop}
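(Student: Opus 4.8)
The plan is to follow the same strategy used in the proof of Proposition~\ref{prop:JX-Stag-step2-Linf} for the staggered scheme, namely to rewrite $u_j^{n+1}$ as a convex combination of elements of $K$ by passing to the symmetric variables \eqref{eq:JX-rs} and exploiting property~\eqref{it:JX-K}. First I would specialize the update \eqref{eq:wnp1} to the case $\lambda_{\ell,j\pm1/2}=-\lambda$, $\lambda_{r,j\pm1/2}=\lambda$ (as is done just after \eqref{eq:F2} for the Jin--Xin model), so that the $u$-update reads
\[
u_j^{n+1}=u_j^n+\frac{\lambda\Delta t}{2\Delta x}\bigl(u_{j+1}^n-2u_j^n+u_{j-1}^n\bigr)
-\frac{\Delta t}{2\Delta x}\Bigl(\tilde g_{j+1}-\tilde g_{j-1}\Bigr),
\]
where $\tilde g_{k}:=(v_{k}^n-g(u_{k}^n))e^{-\Delta t/\varepsilon}+g(u_{k}^n)$ is precisely the first component $v_{k+1/2}^{R}(\Delta t)=v_{k-1/2}^{L}(\Delta t)$ of the solution of the source ODE started from $\mathbf W_k^n$. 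The key observation, exactly as in the staggered-scheme proof, is that $u_k^n\pm\frac1\lambda\tilde g_k$ is a convex combination of $h_\pm(u_k^n)\in K_\pm$ and $r_k^n$ (resp.\ $s_k^n$) $\in K_\pm$, with weights $1-e^{-\Delta t/\varepsilon}$ and $e^{-\Delta t/\varepsilon}$; hence $u_k^n+\frac1\lambda\tilde g_k\in K_+$ and $u_k^n-\frac1\lambda\tilde g_k\in K_-$ whenever $(u_k^n,v_k^n)\in D_K^\lambda$.

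Next I would group terms so that the $\tilde g$ contribution is written through these symmetric combinations. Writing $\rho_k:=u_k^n+\frac1\lambda\tilde g_k$ and $\sigma_k:=u_k^n-\frac1\lambda\tilde g_k$, one has $\tilde g_k=\frac\lambda2(\rho_k-\sigma_k)$, and a short rearrangement of the $u$-update gives something of the form
\[
u_j^{n+1}
=\Bigl(1-\frac{2\lambda\Delta t}{\Delta x}\Bigr)u_j^n
+\frac{\lambda\Delta t}{2\Delta x}\bigl(\rho_{j-1}+\sigma_{j+1}\bigr)
+\frac{\lambda\Delta t}{2\Delta x}\bigl(\sigma_{j-1}+\rho_{j+1}\bigr),
\]
after adding and subtracting $\lambda u_{j\pm1}^n$ in the flux difference exactly as in the staggered proof. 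Under the CFL condition \eqref{CFL:ARS}, which here reads $\lambda\Delta t/\Delta x\le1/2$, the coefficient $1-2\lambda\Delta t/\Delta x$ is nonnegative and all weights sum to one. Since $\frac12\rho_{j-1}+\frac12\sigma_{j+1}\in\frac12K_++\frac12K_-=K$ and likewise $\frac12\sigma_{j-1}+\frac12\rho_{j+1}\in K$ by property~\eqref{it:JX-K}, and $u_j^n\in K$, the right-hand side is a convex combination of elements of the convex set $K$, hence $u_j^{n+1}\in K$.

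The main obstacle is purely bookkeeping: getting the flux difference in \eqref{eq:wnp1} into the clean symmetric form above, i.e.\ correctly matching the numerical-diffusion term $\frac\lambda2(u_{j+1}^n-2u_j^n+u_{j-1}^n)$ against the centered difference of $\tilde g$ so that precisely the combinations $u_{j\pm1}^n\pm\frac1\lambda\tilde g_{j\pm1}$ appear and no stray term survives. One must also double-check that the CFL \eqref{CFL:ARS} with the explicit wave speeds $\pm\lambda$ yields exactly the sign condition $\lambda\Delta t/\Delta x\le1/2$ needed for the leading coefficient to be nonnegative; this is where the factor $1/2$ in \eqref{CFL:ARS} (rather than $1$ as in \eqref{CFLstaggered}) is essential, because the approximate Riemann solver overlaps two half-cells. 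No genuinely new idea beyond Proposition~\ref{prop:JX-Stag-step2-Linf} is required; the argument transfers almost verbatim, the only change being the one-step rather than two-step structure and the single factor $e^{-\Delta t/\varepsilon}$ in place of $e^{-\Delta t/2\varepsilon}$.
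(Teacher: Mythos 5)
Your overall strategy is exactly the one the paper uses: pass to the symmetric variables, observe that $u_k^n\pm\frac1\lambda\tilde g_k$ (with $\tilde g_k=(v_k^n-g(u_k^n))e^{-\Delta t/\varepsilon}+g(u_k^n)$) is a convex combination of $h_\pm(u_k^n)$ and $r_k^n$ (resp.\ $s_k^n$), hence lies in $K_\pm$ by the first part of Proposition~\ref{prop:JX-Stag-step2-Linf}, and then exhibit $u_j^{n+1}$ as a convex combination of elements of $K$ using $K=\frac12 K_++\frac12 K_-$. This is the paper's proof.

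However, the explicit decomposition you display is wrong. With your notation $\rho_k=u_k^n+\frac1\lambda\tilde g_k$ and $\sigma_k=u_k^n-\frac1\lambda\tilde g_k$, one has $\rho_k+\sigma_k=2u_k^n$, so in your proposed identity
\begin{equation*}
u_j^{n+1}
\overset{?}{=}\Bigl(1-\tfrac{2\lambda\Delta t}{\Delta x}\Bigr)u_j^n
+\tfrac{\lambda\Delta t}{2\Delta x}\bigl(\rho_{j-1}+\sigma_{j+1}\bigr)
+\tfrac{\lambda\Delta t}{2\Delta x}\bigl(\sigma_{j-1}+\rho_{j+1}\bigr)
\end{equation*}
the $\tilde g$ contributions cancel identically and the right-hand side collapses to $\bigl(1-\tfrac{2\lambda\Delta t}{\Delta x}\bigr)u_j^n+\tfrac{\lambda\Delta t}{\Delta x}(u_{j-1}^n+u_{j+1}^n)$, which is not the update \eqref{eq:wnp1}: the centered flux difference has disappeared. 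The correct rearrangement involves only \emph{one} symmetric pair,
\begin{equation*}
u_j^{n+1}=\Bigl(1-\frac{\lambda\Delta t}{\Delta x}\Bigr)u_j^n
+\frac{\lambda\Delta t}{\Delta x}\cdot\frac12\bigl(\rho_{j-1}+\sigma_{j+1}\bigr),
\end{equation*}
i.e.\ $\rho_{j-1}=r_{j-1/2}^{L}(\Delta t)\in K_+$ and $\sigma_{j+1}=s_{j+1/2}^{R}(\Delta t)\in K_-$, since $\frac{\lambda\Delta t}{2\Delta x}u_{j+1}^n-\frac{\Delta t}{2\Delta x}\tilde g_{j+1}=\frac{\lambda\Delta t}{2\Delta x}\sigma_{j+1}$ and $\frac{\lambda\Delta t}{2\Delta x}u_{j-1}^n+\frac{\Delta t}{2\Delta x}\tilde g_{j-1}=\frac{\lambda\Delta t}{2\Delta x}\rho_{j-1}$. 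The conclusion then follows as you intended, and the leading coefficient only requires $\lambda\Delta t/\Delta x\le 1$ — so the factor $1/2$ in \eqref{CFL:ARS} is not what makes the convexity work (it is needed for the non-interaction of neighbouring Riemann fans in \eqref{eq:SAR-3}), contrary to what you suggest. Once the decomposition is corrected, the argument is the paper's verbatim.
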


\begin{proof}
  The update for $u_j^{n+1}$ can be written as
  \begin{equation*}
    \begin{aligned}
      u_j^{n+1} &= u_{j}^n (1-\dfrac{\lambda \Delta t}{\Delta x}) +
      \dfrac{\lambda \Delta t}{2 \Delta x} (u_{j+\demi}^ R(\Delta t) +
      u_{j-\demi}^ L(\Delta t) )\\
      &-
      \dfrac{ \Delta t}{2 \Delta x} (v_{j+\demi}^ R(\Delta t) -
      v_{j-\demi}^ L(\Delta t)).
    \end{aligned}
  \end{equation*}
  Reorganizing the terms and using the definition of $r_{j\pm
    \demi }^{L,R}(\Delta t)$, it yields
  \begin{equation*}
    \begin{aligned}
       u_j^{n+1} &= u_{j}^n (1-\dfrac{\lambda \Delta t}{\Delta x}) +
       \dfrac{\lambda \Delta t}{2 \Delta x}  (r_{j-\demi}^L (\Delta
       t)+s_{j+\demi}^R (\Delta t)).
    \end{aligned}
  \end{equation*}
  According to \ref{prop:JX-Stag-step2-Linf}, the
  sum of the two last terms belongs to
  $\dfrac 1 2 K_+ + \dfrac 1 2
  K_- =K$.
  The first term belongs to $K$. Hence $u_j^{n+1}$ is a convex
  combination of elements of $K$, which concludes the proof.
\end{proof}

Moreover, returning to the definition of the approximate Riemann solver, we can
establish a local entropy stability property. We begin with a general
setting and consider any  solution of the relaxation system
\eqref{eq:HSR2} satisfying the entropy identity
\begin{equation}
\label{eq:EI2}
\partial_t H(\mathbf W) + \partial_x \mathbf \Psi(\mathbf W)
= \mathbf D(\mathbf W),
\end{equation}
where \(H\) is a convex entropy associated with the entropy flux \(\mathbf \Psi\),
and \(\mathbf D(\mathbf W)=\frac{1}{\varepsilon}\,\nabla H(\mathbf W)\cdot \mathbf R(\mathbf W)\)
is the dissipative source contribution. Following
\cite{harten1983upstream,BerthonChalons16}, if we assume that

\begin{equation}
  \label{eq:ARS-inegEntrop}
  \begin{aligned}
    \dfrac{1}{\Delta x} \int_{x_j}^{x_{j+1}}H&\big(\tilde{\mathbf{W}}
    \big(\dfrac{x-x_{j+\demi}}{\Delta t}\big. \big.;
    \big. \big.\mathbf W_j^n, \mathbf
    W_{j+1}^n\big)\big)\mathrm d x\\
    &\leq \dfrac{1}{2} \left(
      H(\mathbf W_j^n)+H(\mathbf W_{j+1}^n)\right) - \dfrac{\Delta
      t}{\Delta x}(\mathbf \Psi(\mathbf W_{j+1}^n)- \mathbf
    \Psi(\mathbf W_{j}^n))\\
    &+ \Delta t \mathbf D(\Delta t, \Delta x ; \mathbf
    W_j^n, \mathbf W_{j+1}^n),
  \end{aligned}
\end{equation}
  for all $j\in \mathbb Z$, 
with $\displaystyle\lim_{\substack{\mathbf W_\ell, \mathbf W_r \to \mathbf
    W\\ \Delta t, \Delta x \to 0}}\mathbf D(\Delta t, \Delta x ; \mathbf
W_\ell, \mathbf W_r) = \mathbf D(\mathbf W)$,
then the numerical scheme \eqref{eq:SAR-3}  satisfies
\begin{equation}
  \label{eq:ARS-inegEntrop-LOC}
  H(\mathbf W_{j}^{n+1}) \leq   H(\mathbf W_{j}^{n})- \dfrac{\Delta
    t}{\Delta x} \big( \mathbf \Psi_{j+\demi} - \mathbf \Psi_{j-\demi}\big) + \Delta t
  \mathbf D_j^n,
\end{equation}
with
\begin{equation}
  \label{eq:Din}
   \mathbf D_j^n = \dfrac{1}{2}\big(  \mathbf D(\Delta t, \Delta x ; \mathbf
W_j^n, \mathbf W_{j+1}^n ) + \mathbf D(\Delta t, \Delta x ; \mathbf
W_{j-1}^n, \mathbf W_{j}^n \big)\big),
\end{equation}
and
\begin{equation}
  \label{eq:Psidemi}
  \begin{aligned}
    \mathbf \Psi_{j+\demi} &= \dfrac{1}{2}\big(\mathbf \Psi (\mathbf W_{j+1}^n)
    - \mathbf \Psi (\mathbf W_{j}^n) \big)
    -\dfrac 1 4 \dfrac{\Delta x}{\Delta t} \big(H(\mathbf W_{j+1}^n)-
    H(\mathbf W_{j}^n)\big)\\
    &+ \dfrac{1}{2\Delta t}\int_{x_j}^{x_{j+\demi}}H\big(\tilde{\mathbf{W}}
        \big(\dfrac{x-x_{j+\demi}}{\Delta t}\big. \big.;
      \big. \big.\mathbf W_j^n, \mathbf
      W_{j+1}^n\big)\big)\mathrm d x\\
      &- \dfrac{1}{2\Delta t}\int_{x_{j+\demi}}^{x_{j+1}}H\big(\tilde{\mathbf{W}}
        \big(\dfrac{x-x_{j+\demi}}{\Delta t}\big. \big.;
      \big. \big.\mathbf W_j^n, \mathbf
      W_{j+1}^n\big)\big)\mathrm d x.
  \end{aligned}
\end{equation}
Indeed, 
since the entropy function $H$  is a
convex, the Jensen inequality gives
\begin{equation*}
  \begin{aligned}
    H(\mathbf W_{j}^{n+1}) &\leq \dfrac{1}{\Delta x}
    \left[
      \int_{x_{j-\demi}}^{x_{j}}H\big(\tilde{\mathbf{W}}
        \big(\dfrac{x-x_{j-\demi}}{\Delta t};
      \mathbf W_{j-1}^n, \mathbf
      W_{j}^n\big)\big)\mathrm d x \right.\\
      &\quad +
      \left.\int_{x_{j}}^{x_{j+\demi}}H\big(\tilde{\mathbf{W}}
        \big(\dfrac{x-x_{j+\demi}}{\Delta t};
      \mathbf W_{j}^n, \mathbf
      W_{j+1}^n\big)\big)\mathrm d x
    \right]\\
    &\leq  \dfrac{1}{2\Delta x} \int_{x_{j-\demi}}^{x_{j}}H\big(\tilde{\mathbf{W}}
        \big(\dfrac{x-x_{j-\demi}}{\Delta t};
      \mathbf W_{j-1}^n, \mathbf
      W_{j}^n\big)\big)\mathrm d x\\
      &+ \dfrac{1}{2\Delta x} \int_{x_{j-1}}^{x_{j}}H\big(\tilde{\mathbf{W}}
        \big(\dfrac{x-x_{j-\demi}}{\Delta t};
      \mathbf W_{j-1}^n, \mathbf
      W_{j}^n\big)\big)\mathrm d x \\
      &- \dfrac{1}{2\Delta x} \int_{x_{j-1}}^{x_{j-\demi}}H\big(\tilde{\mathbf{W}}
        \big(\dfrac{x-x_{j-\demi}}{\Delta t};
      \mathbf W_{j-1}^n, \mathbf
      W_{j}^n\big)\big)\mathrm d x \\
      & +\dfrac{1}{2\Delta x} \int_{x_{j}}^{x_{j+\demi}}H\big(\tilde{\mathbf{W}}
        \big(\dfrac{x-x_{j+\demi}}{\Delta t};
      \mathbf W_{j}^n, \mathbf
      W_{j+1}^n\big)\big)\mathrm d x\\
      &+ \dfrac{1}{2\Delta x} \int_{x_{j}}^{x_{j+1}}H\big(\tilde{\mathbf{W}}
        \big(\dfrac{x-x_{j+\demi}}{\Delta t};
      \mathbf W_{j}^n, \mathbf
      W_{j+1}^n\big)\big)\mathrm d x \\
      &- \dfrac{1}{2\Delta x} \int_{x_{j+\demi}}^{x_{j+1}}H\big(\tilde{\mathbf{W}}
        \big(\dfrac{x-x_{j+\demi}}{\Delta t};
      \mathbf W_{j}^n, \mathbf
      W_{j+1}^n\big)\big)\mathrm d x.
    \end{aligned}
\end{equation*}
If the approximate Riemann solver satisfies
\eqref{eq:ARS-inegEntrop}, then the local entropy inequality
\eqref{eq:ARS-inegEntrop-LOC} holds with the \textit{ad hoc}
definitions of the numerical entropy fluxes and source term
approximation \eqref{eq:Din}-\eqref{eq:Psidemi}.

Actually, the left-hand side of \eqref{eq:ARS-inegEntrop} can be written explicitly and reads
\begin{equation}
  \label{SRA-entropy0}
  \begin{aligned}
    \dfrac{1}{\Delta x} \int_{x_j}^{x_{j+1}}H&\big(\tilde{\mathbf{W}}
        \big(\dfrac{x-x_{j+\demi}}{\Delta t}\big. \big.;
      \big. \big.\mathbf W_j^n, \mathbf
      W_{j+1}^n\big)\big)\mathrm d x =\\
     & \frac{1}{2} \left[
      H(\mathbf W_j^n) + H(\mathbf W_{j+1}^n) \right] + \frac{\Delta
      t}{\Delta x} (
    \lambda_{j+\demi,r}-\lambda_{j+\demi,\ell})H(\mathbf
    W_{j+\demi}^*)\\
    &+ \frac{\Delta
      t}{\Delta x} (\lambda_{j+\demi,\ell} H(\mathbf W_{j}^n)- \lambda_{j+\demi,r} H(\mathbf W_{j+1}^n)). 
  \end{aligned}
\end{equation}
Following \cite{Martaud25}, in order to prove the validity of a local discrete entropy inequality,
it is then sufficient to prove that
\begin{equation}
  \label{SRA-entropy}
  \begin{aligned}
   H(\mathbf W_{j+\demi}^*)&\leq\dfrac{1}{
     \lambda_{j+\demi,r}-\lambda_{j+\demi,\ell}}\left(
     \lambda_{j+\demi,r} H(\mathbf W_{j+1}^n)
    -
       \lambda_{j+\demi,\ell} H(\mathbf W_{j}^n))\right)\\
    & - \dfrac{1}{
       \lambda_{j+\demi,r}-\lambda_{j+\demi,\ell}}
     (\mathbf{\Psi}(\mathbf W_{j+1}^n) -\mathbf{\Psi}(\mathbf W_{j}^n))
     + \Delta t D(\Delta t, \Delta x; \mathbf W_{j}^n, \mathbf W_{j+1}^n).
  \end{aligned}
\end{equation}
Since the numerical fluxes depend on the exact solution of the source–term ODEs and the asymptotic-preserving correction involves implicit terms, one can establish \eqref{SRA-entropy} up to a remainder of the form \(\Delta t\,\mu(\Delta t)\), where \(\mu(\Delta t)\to0\) as \(\Delta t\to0\).
The calculations are detailed for the Jin–Xin model—namely, for the numerical scheme \eqref{eq:wnp1}—along the lines of \cite{BerthonChalons16}.

First, we rewrite the scheme \eqref{eq:wnp1} as a perturbation of the HLL scheme \cite{harten1983upstream} applied to the homogeneous system \eqref{eq:HSR2} (i.e., with the source term suppressed).
Using an asymptotic expansion in the small parameter \(\Delta t/\varepsilon\) near zero, we obtain

\begin{equation}
  \label{eq:JX-HLL-corr}
  \begin{aligned}
    u_j^{n+1} = u_j^{\text{HLL}}+ \Delta t
    \mu_u(\Delta t),\qquad
    v_j^{n+1} = v_j^{\text{HLL}}+ \Delta t
    \mu_v(\Delta t)+ \Delta t q_j
  \end{aligned}
\end{equation}
with
\begin{equation}
  \label{eq:JX-HLL}
  \begin{aligned}
    u_j^{\text{HLL}} &= u_j^n -\dfrac{\Delta t}{\Delta x} \left(
      -\dfrac{\lambda}{2}(u_{j+1}^n-2u_j^n+u_{j-1}^n) +
      \dfrac{1}{2}(v_{j+1}^n -v_{j-1}^n)\right),\\
    v_j^{\text{HLL}}&=v_j^n - \dfrac{\Delta t}{\Delta x} \left(
      -\dfrac{\lambda}{2}(v_{j+1}^n-2v_j^n+v_{j-1}^n) +
      \dfrac{\lambda^2}{2}(u_{j+1}^n -u_{j-1}^n)\right)
  \end{aligned}
\end{equation}
and
\begin{equation}
  \label{eq:JX-HLL-corr-mu}
  \begin{aligned}
   \mu_u(\Delta t) &= \dfrac{\Delta t }{2\varepsilon \Delta x}
   (v_{j+1}^n- g(u_{j+1}^n) -v_{j-1}^n+g(u_{j-1}^n)) + \mu(\Delta
   t/\varepsilon ),\\
   \mu_v(\Delta t) &= -\dfrac{\lambda \Delta t}{2\varepsilon \Delta x}
   (v_{j+1}^n -2v_j^n+v_{j-1}^n) + \dfrac{\lambda^2 \Delta
     t}{2 \varepsilon \Delta x}(u_{j+1}^n -u_{j-1}^n) + \mu(\Delta
   t/\varepsilon ),\\
   q_j &= \dfrac{1}{\varepsilon}(g(u_j^{n+1})-v_j^n)).
  \end{aligned}
\end{equation}
Here, $\mu(\Delta t)$ denotes a function satisfying $\lim_{\Delta t\to 0}\mu(\Delta t)=0$.
Using the expression of $u_j^{n+1}$, the term $q_j$ reads
\begin{equation}
  \label{eq:D_j}
     q_j = \dfrac{1}{\varepsilon}(g(u_j^{HLL})-v_j^n))+ \dfrac 1
     \varepsilon g'(u_j^{HLL})\Delta t \mu_u(\Delta t),
   \end{equation}
   that is to say  $\displaystyle\lim_{\substack{(u_j,v_j)\to \mathbf
    (u,v), \forall j\in \mathbb Z\\ \Delta t, \Delta x \to 0}}
q_j=\dfrac{1}{\varepsilon}(g(u)-v)$.
Now using the expression of the entropy $H$, it holds
\begin{equation}
  \label{eq:JX-H}
  \begin{aligned}
    H(u_j^{n+1},v_j^{n+1}) &=  H(u_j^{HLL},v_j^{HLL})
    + \p_u H(u_j^{HLL},v_j^{HLL})\Delta t \mu_u(\Delta t) \\
    &\quad+ \p_v
    H(u_j^{HLL},v_j^{HLL})(\Delta t\mu_v(\Delta t)+\Delta t q_j).
  \end{aligned}
\end{equation}
Since the
HLL scheme is entropy satisfying \cite{harten1983upstream}, it holds
\begin{equation*}
  H(\mathbf W_j^{\text{HLL}}):= H(u_j^{HLL},v_j^{HLL}) \leq H(\mathbf W_j^n) -
  \dfrac{\Delta t}{\Delta x} (\Psi_{j+\demi}^{\text{HLL}} - \Psi_{j-\demi}^{\text{HLL}})
\end{equation*}
with an appropriate numerical entropy flux $\Psi_{j\pm\demi}^{\text{HLL}}$.
Combining this with \eqref{eq:JX-H} yields
\begin{equation}
  \label{eq:JX-H2}
  \begin{aligned}
    H(\mathbf W_j^{n+1}) =
    H(\mathbf W_j^n) -
    \dfrac{\Delta t}{\Delta x} (\Psi_{j+\demi}^{\text{HLL}} -
    \Psi_{j-\demi}^{\text{HLL}})
    + \Delta t D_j
  \end{aligned}
\end{equation}
with
    \begin{equation}
  \label{eq:JX-H3}
  \begin{aligned}
   D_j &=\p_u H(u_j^{HLL},v_j^{HLL})\mu_u(\Delta t) + \p_v
    H(u_j^{HLL},v_j^{HLL})(\mu_v(\Delta t)+ q_j).
  \end{aligned}
\end{equation}
Finally the numerical scheme \eqref{eq:wnp1} satisfies a local entropy
inequality in the sense of \eqref{eq:ARS-inegEntrop-LOC}.
\section{Numerical comparison of the two schemes}
\label{sec:numerical-results}

This section presents numerical results obtained with the staggered scheme of Section~\ref{sec:staggered-scheme} and the approximate Riemann solver of Section~\ref{sec:ARS} for the models introduced in Section~\ref{sec:exemples}. The results are compared against a reference solution computed with a splitting scheme on a fine mesh. The latter consists of two substeps:
\begin{enumerate}
\item \textbf{Convective step} (from $t^n$ to $t^{n+\demi}$):
  \[
    \mathbf{W}_j^{\,n+\frac{1}{2}}
    = \mathbf{W}_j^{\,n}
      - \frac{\Delta t}{\Delta x}
        \big(\mathbf{F}_{j+\frac{1}{2}}^{\,n}
             - \mathbf{F}_{j-\frac{1}{2}}^{\,n}\big).
  \]
\item \textbf{Source step} (from $t^{n+\demi}$ to $t^{n+1}$):
  \[
    \mathbf{W}_j^{\,n+1}
    = \mathbf{W}_j^{\,n+\frac{1}{2}}
      + \frac{\Delta t}{\varepsilon}\,
        \mathbf{R}\!\big(\mathbf{W}_j^{\,n+1}\big).
  \]
\end{enumerate}
Here $\mathbf{F}_{j+\frac{1}{2}}^{\,n}$ denotes the HLL numerical flux \cite{harten1983upstream}.
Convergence properties of this splitting scheme for the Jin–Xin model were proved in \cite{LS01} using an entropy method; see also \cite{FR13} for uniform convergence in $\varepsilon$ and $\Delta x$ with detailed error estimates.

In all the numerical tests below, the staggered scheme is run under the CFL condition \eqref{CFLstaggered}, while \eqref{CFL:ARS} is used for the approximate Riemann solver. Homogeneous Neumann boundary conditions are imposed at the domain boundaries.


\subsection{The Jin and Xin model}
\label{sec:JX-numerics}

We consider the Jin–Xin relaxation model \eqref{eq:JX} with
\(g(u)=\tfrac12 u^{2}\) for \(u\in\mathbb R\) and \(\lambda=2\).
As \(\varepsilon\to0\), solutions of \eqref{eq:JX} converge to solutions of
the Burgers’ equation \eqref{eq:SCL}.

\begin{figure}[ht]
  \centering
  \includegraphics[width=0.7 \textwidth]{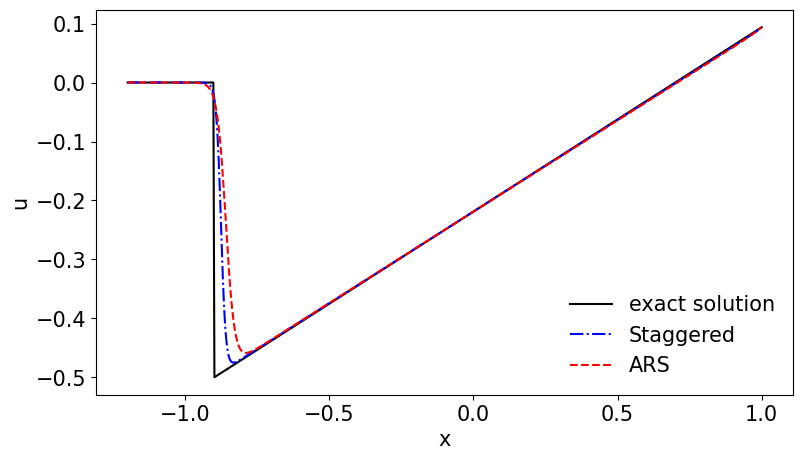}
  \caption{Solutions for the limiting behavior $\varepsilon=10^{-6}$ of
    Jin-Xin model on a 500-cell mesh at $t_{final}=3.2$}
    \label{fig:JXeps_0}
\end{figure}

Figure \ref{fig:JXeps_0} presents the results of a Riemann initial
data problem with
\begin{equation}
  \label{eq:init_cond}
  u_0(x) =
  \begin{cases}
    0, & x < 0.3, \\
    -1, & x \in (0.3,0.7), \\
    \frac{1}{2}, & x > 0.7,
\end{cases}
\end{equation}
and $v_0(x) = g(u_0(x))$, that is the initial data at equilibrium.
The relaxation parameter is set to $\varepsilon=10^{-6}$ such that the
computed profiles can be compared to the equilibrium solution, which
is composed of a shock combined with a rarefaction wave. The
computational domain is made of 500 cells, the CFL parameter is set to
$0.9$ and the resultats are represented at $t_{final} = 3.2$.
One observes the good asymptotic behaviour of both schemes, the approximate
Riemann solver being more diffusive than the staggered scheme.

\begin{figure}[ht]
  \centering
  \includegraphics[width=0.45\textwidth]{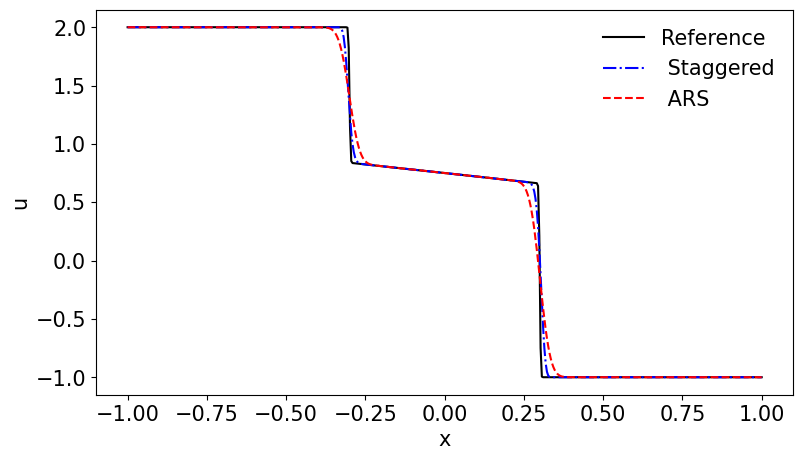}
  \includegraphics[width=0.45\textwidth]{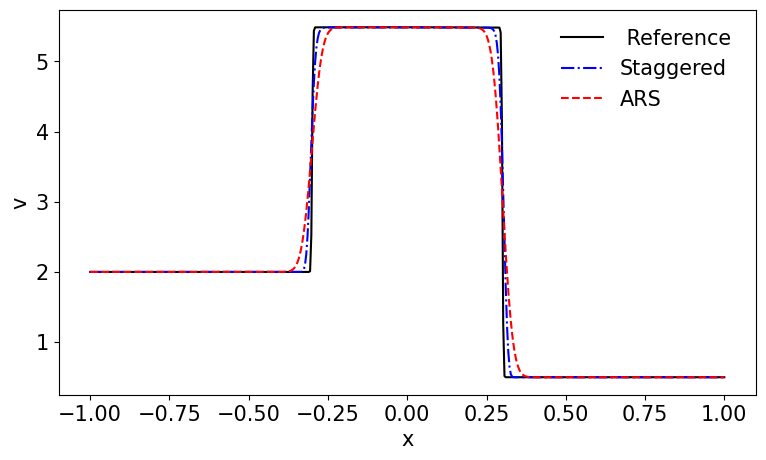}
  
  \includegraphics[width=0.45\textwidth]{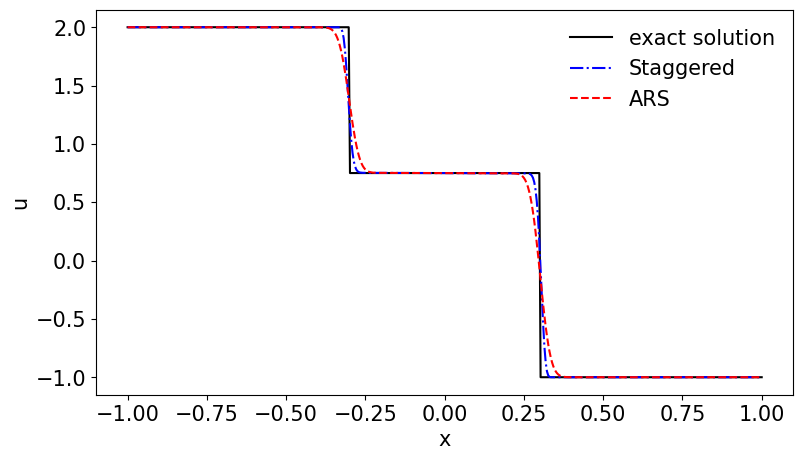}
  \includegraphics[width=0.45\textwidth]{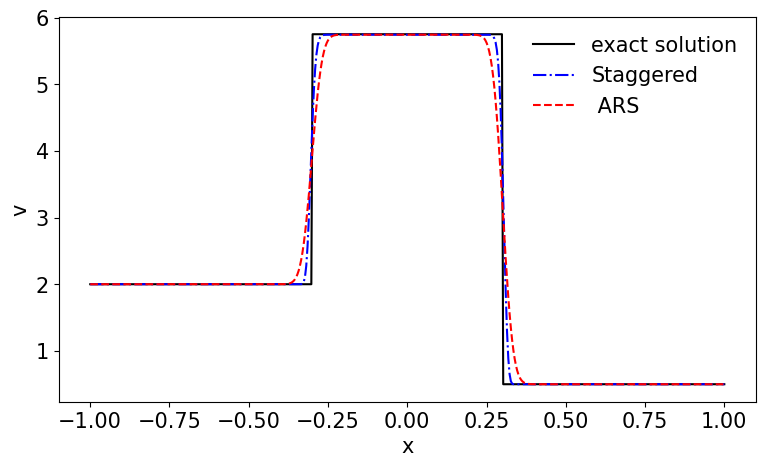}
  
  \caption{Comparison of the solutions obtained with the numerical
    schemes on a 500-cell mesh at final time $T = 0.1$
    for $\varepsilon=1$ (top) and $\varepsilon=40$ (bottom).}
    \label{fig:JXeps}
\end{figure}
Figure~\ref{fig:JXeps} shows solutions of the Jin–Xin model for larger values of \(\varepsilon\).
The reference solution is computed with the HLL splitting scheme on a fine mesh of \(10{,}000\) cells over \((-1,1)\).
The initial data are
\[
(u,v)(0,x)=
\begin{cases}
(2,\,2), & x<0,\\[1mm]
(-1,\,0.5), & x\ge 0,
\end{cases}
\]
and the computational mesh consists of \(500\) cells. Simulations are run up to \(T=0.1\) with CFL \(=0.9\) and \(\lambda=3\).
The top panels of Figure~\ref{fig:JXeps} correspond to \(\varepsilon=1\), while the bottom panels correspond to \(\varepsilon=40\).
Both schemes behave similarly. For large \(\varepsilon\), the solutions develop extended plateaus and approach the hyperbolic solution of the homogeneous Jin–Xin model.

\begin{figure}[ht]
  \centering
  \includegraphics[width=0.7\textwidth]{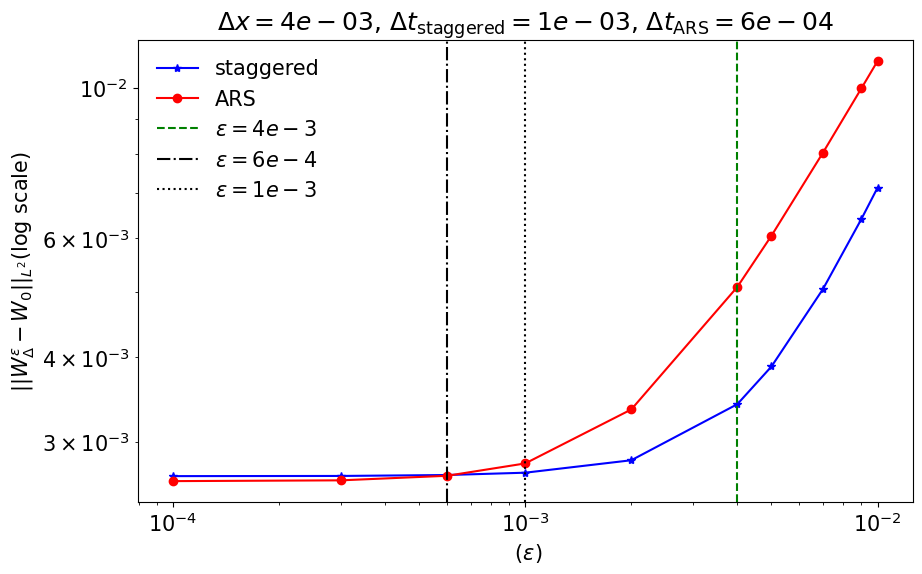}
  
  \includegraphics[width=0.7\textwidth]{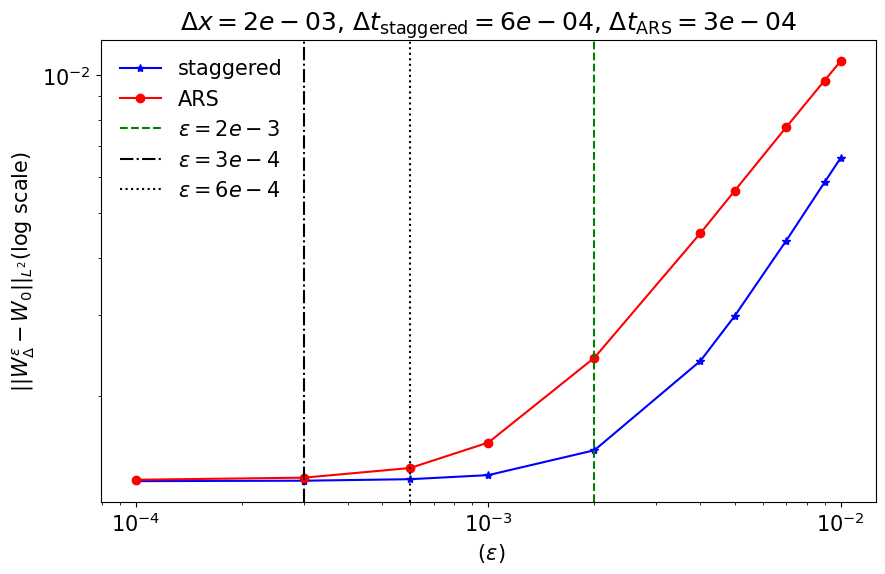} 
  \caption{\(L^2\)-norm error between the numerical solution and an
    exact solution to the Burger's equation for small values of
    $\varepsilon$ at fixed $\Delta x=4 \times 10^{-3}$ (top) and
    $\Delta x=2 \times 10^{-3}$ (bottom).}
  \label{fig:erreurL2_XJ_N500}
\end{figure}

Figure \ref{fig:erreurL2_XJ_N500} illustrates the accuracy of the
schemes with respect to $\varepsilon$. For small value of
$\varepsilon$, the numerical solutions are compared with
the exact solution to the
Burger's equation with initial profile $u_0(t,x) = \frac{x}{1 + t}$. 
The plots report the $L^2$-norm error for a fixed space step
$\Delta x=4 \times 10^{-3}$ (top) and $\Delta x=2 \times 10^{-3}$.

Note that the CFL constraints differ between the two schemes.

In the stiff regime, where $\varepsilon$ is much smaller than both $\Delta x$ and $\Delta t$, 
the two schemes exhibit very low error, confirming their asymptotic
preserving (AP) property.
As $\varepsilon$ increases, different behaviors are observed depending on the scheme. 
For instance, in Figure \eqref{fig:erreurL2_XJ_N500}-bottom,
the error of the approximate Riemann solver, which uses a time step
$\Delta t_{\text{ARS}} = 3\text{e}-4$,
 begins to increase significantly once $\varepsilon$ exceeds $\Delta t_{\text{ARS}}$. 
 This suggests a degradation of the AP behavior when the relaxation parameter becomes 
 larger than the time step. Such behaviour is well-known, see
\cite{jin2010asymptotic}  for instance.
A similar behaviour is observed for the staggered scheme, as
$\varepsilon> \Delta t_{\text{staggered}} = 6\text{e}-4$.
For both schemes, a noticeable loss of accuracy is observed when 
$\varepsilon$ becomes larger than the spatial mesh size $\Delta x = 2\text{e}-3$. 
This behavior reflects the transition out of the equilibrium regime, where relaxation 
no longer dominates, and non-equilibrium effects become significant.

\begin{figure}[ht]
  \centering
  \includegraphics[width=0.7\textwidth]{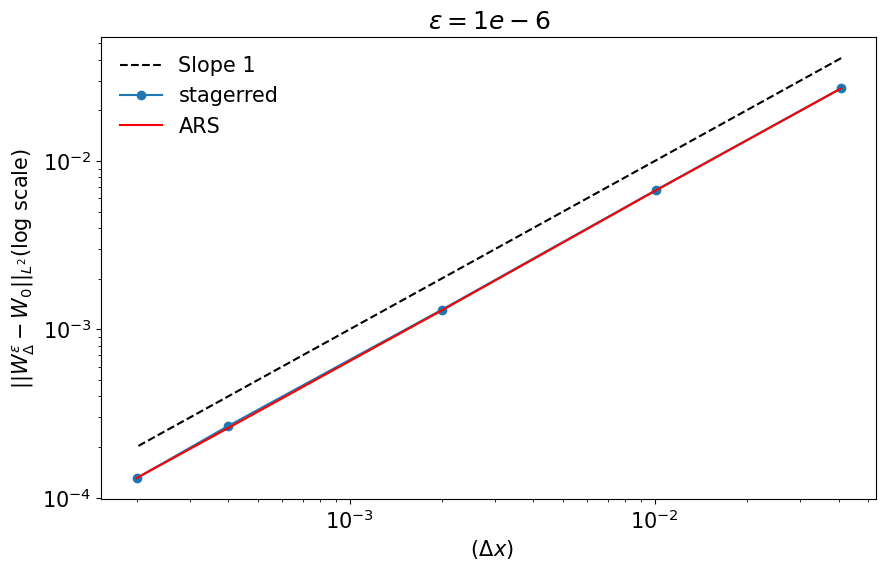} 
  \caption{\(L^2\)-norm error between the numerical solution and the
    exact solution $W_0 $ at fixed relaxation parameter $\varepsilon
    = 10^{-6}  $
    for varying mesh sizes $\Delta x$.}
  \label{fig:convergence_plot_dx}
\end{figure}

To finish, Figure \ref{fig:convergence_plot_dx} illustrates
the first-order convergence in space for both the
staggered and ARS schemes for a fixed $\varepsilon=10^{-6}$ .


\subsection{The Chaplygin model}
\label{sec:Chaplygin-numerics}

We consider the Chaplygin model with the initial data 
$$
(\tau,u,T)(0, x) =
\begin{cases} 
(1,0,1), & \text{if } x < 0, \\ 
(0.8,0,0.8), & \text{otherwise.}
\end{cases}
$$
The computational mesh consists of 1000 cells, with a final simulation
time of $T = 0.1$.
The model parameters are $a=1.8$ and $\gamma=1.4$.

\begin{figure}[ht]
        \centering
        \includegraphics[width=0.7\textwidth]{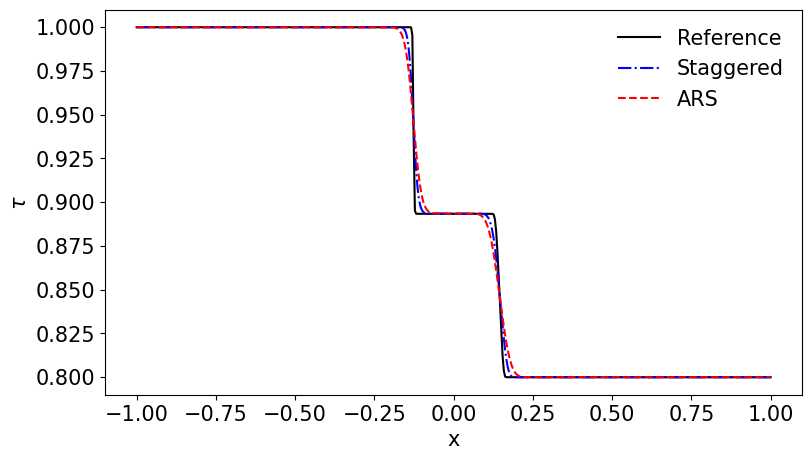}
        \includegraphics[width=0.7\textwidth]{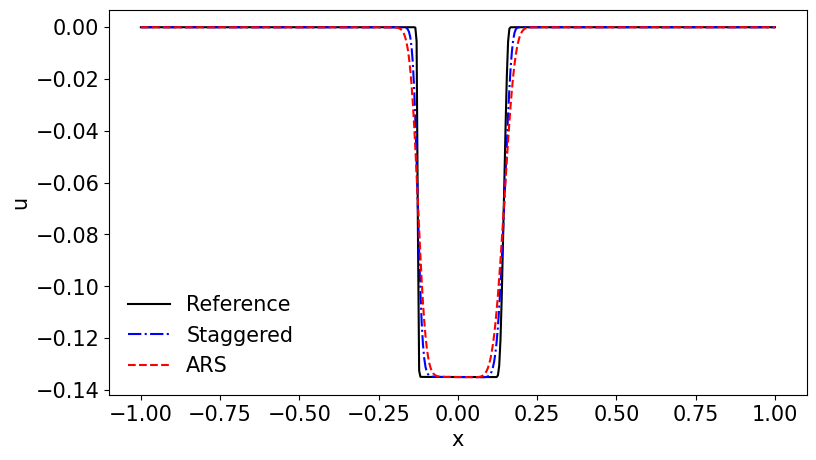}
       \includegraphics[width=0.7\textwidth]{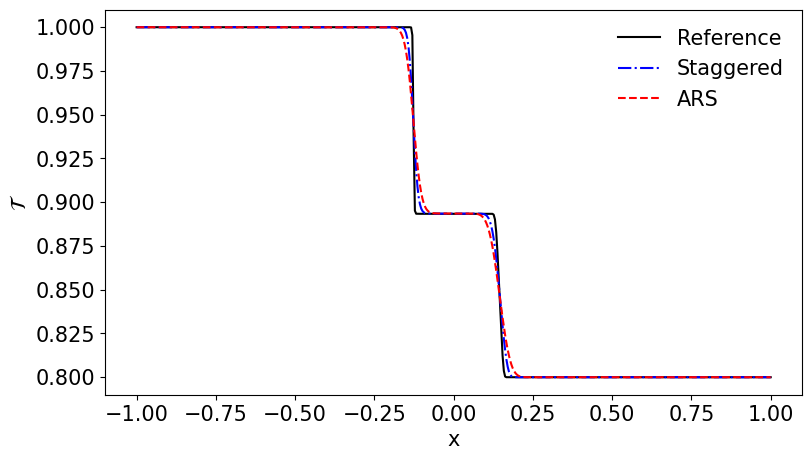}
        \caption{Profiles of covolume $\tau$ (top), velocity $u$
          (middle)
          and relaxed variable $T$ for the Chaplygin model for $\varepsilon = 10^{-6}$.}
  \label{fig:chapeps_0}
\end{figure}

\begin{figure}[ht]
        \centering
        \includegraphics[width=0.7\textwidth]{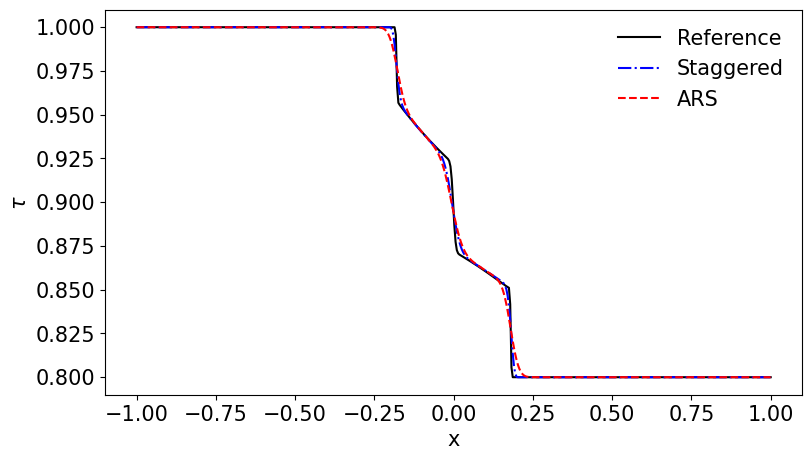}
        \includegraphics[width=0.7\textwidth]{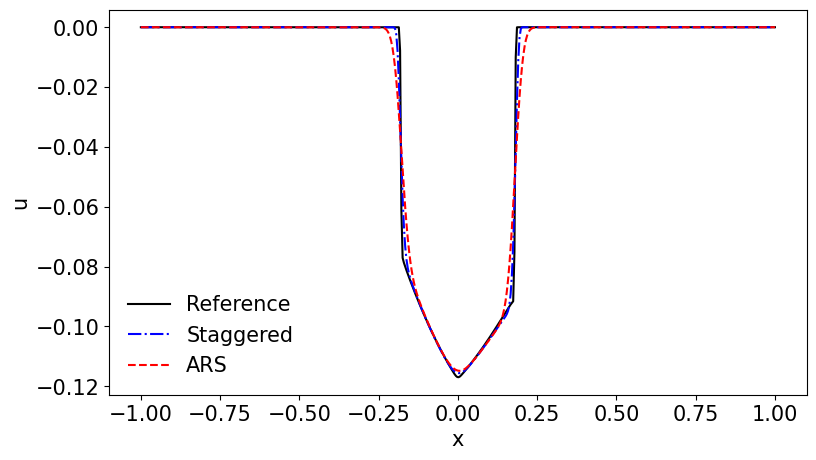}
        \includegraphics[width=0.7\textwidth]{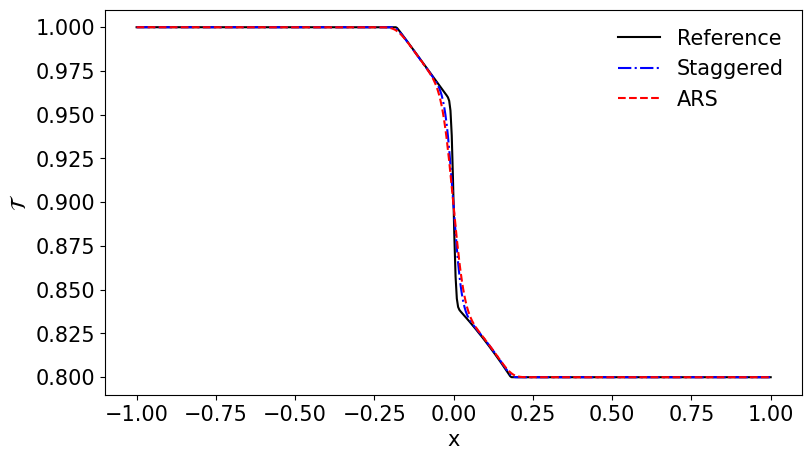}
        \caption{Profiles of covolume $\tau$ (top), velocity $u$
          (middle)
          and relaxed variable $T$ for the Chaplygin model for $\varepsilon = 1$.}
  \label{fig:chapeps_1}
\end{figure}

\begin{figure}[ht]
        \centering
        \includegraphics[width=0.7\textwidth]{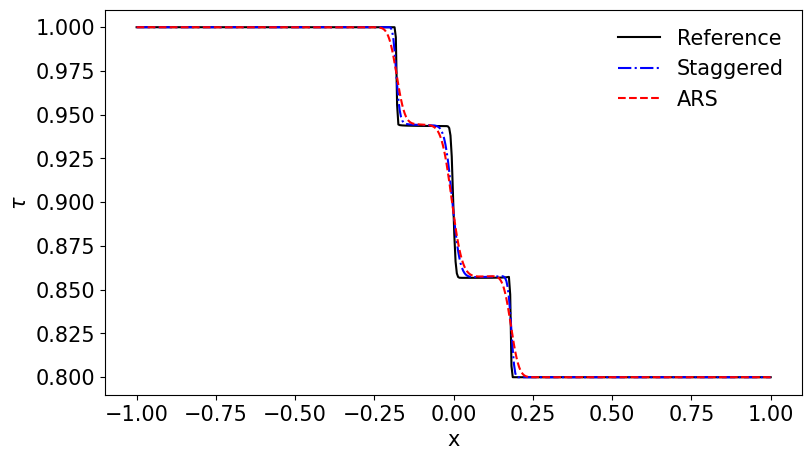}
        \includegraphics[width=0.7\textwidth]{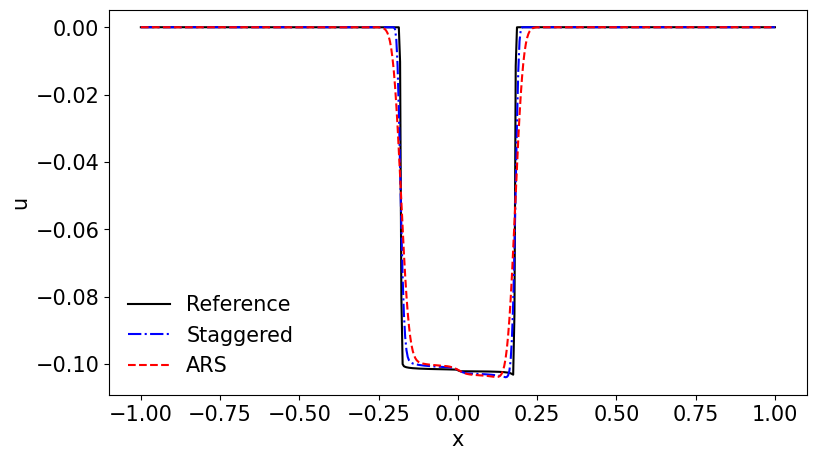}
        \includegraphics[width=0.7\textwidth]{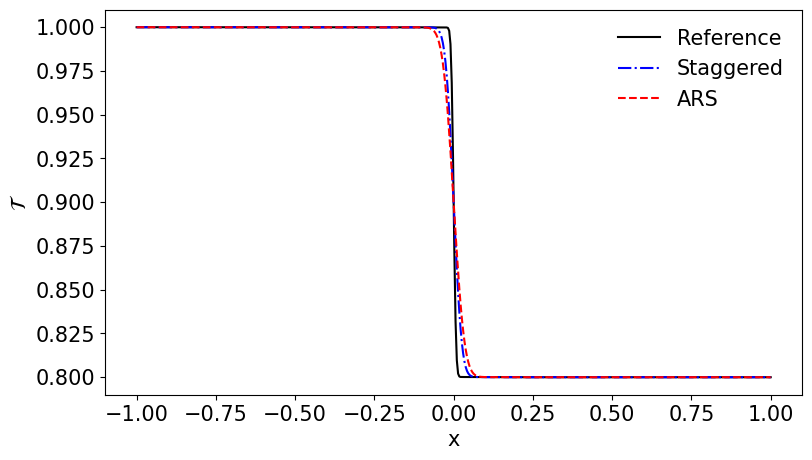}
        \caption{Profiles of covolume $\tau$ (top), velocity $u$
          (middle)
          and relaxed variable $T$ for the Chaplygin model for $\varepsilon = 40$.}
  \label{fig:chapeps_40}
\end{figure}

Figures \ref{fig:chapeps_0}, \ref{fig:chapeps_1}
and \ref{fig:chapeps_40}
present the profile of covolume $\tau$, velocity $u$ and relaxed
variable $T$ for $\varepsilon=0$, $\varepsilon=1$ and $\varepsilon=40$
respectively. For every simulations the reference solution is computed
using a splitting method on a 10,000-cell mesh with $t_{final} = 0.1$.

Again the numerical results illustrate that the proposed schemes are
asymptotic preserving.

\subsection{The two-phase model}
We now consider the compressible two-phase flow model with relaxation
\eqref{eq:phase_transition_HSR}, governed by the perfect gas
coefficients $\gamma_1 = 1.6$ and $\gamma_2 = 1.5$.
The computational domain is $[-0.5,\, 0.5]$, and the final time is set
to $T_{\max} = 0.5$.
The initial data corresponds to a Riemann problem centered at $x = 0$,
with left and right states given by $(\rho_L, u_L, p_L) = (1/0.92,\,
0.4301,\, 0.1445)$ and $(\rho_R, u_R, p_R) = (1/1.3,\, 0.3,\, 0.1)$,
respectively.
Initially the relaxation variable $\varphi$ is set to equilibrium,
namely
$\varphi_{L,R} = \varphi_{eq}(\rho_{L,R})$.
All methods use the CFL condition CFL =
0.9418.
A fine grid with $N = 3000$ is
used for the reference solution obtained by splitting method, while a
coarser grid with $N = 500$ is employed
for the ARS and staggered schemes.

We consider three values of the relaxation parameter: $\varepsilon =
0$, $\varepsilon = 0.1$, and $\varepsilon = 10^4$.
\begin{Rem}
In this case, condition \eqref{condition jacob} is not satisfied; therefore, in the approximate Riemann solver we directly use approximation \eqref{eq:W2}.
\end{Rem}

When $\varepsilon \rightarrow 0$, the system reduces to the
thermodynamic equilibrium model \eqref{eq:phase_transition_HE} with
the equilibrium pressure law $p_{eq}$ given by \eqref{eq:peq}.
Figure \ref{fig:HRM_eps0} show that both schemes match the reference
solution accurately.
The variables $\rho$, $p$, $u$, and $\varphi$ exhibit sharp
transitions that are well resolved. This confirms that both the
Staggered and ARS schemes are asymptotic-preserving in the stiff
regime.

This case $\varepsilon = 0.1$ corresponds to an intermediate
relaxation regime
where the source term is active but not stiff.
Here, $\varphi$ evolves toward equilibrium but has not yet reached it.
As a result, the solution is in a non-equilibrium state. The effect of
the relaxation is visible in the smooth variation of $\varphi$, which
differs from the equilibrium profile. Correspondingly, the density,
pressure and velocity fields also deviate slightly from the
equilibrium structure.
Both schemes remain stable in this regime and give consistent
results. We can observe that, on certain waves, the ARS scheme appears
more diffusive than the staggered one.

In the weak relaxation regime with $\varepsilon = 10^4$,
the source term effect is negligible and the system behaves like an
Euler system coupled with the transport of the mass fraction.
The numerical results show that $\varphi$ remains far from the
equilibrium profile.
This behavior is consistent with the nature of the model in this
regime. The other variables pressure, density, and velocity evolve
according to the conservative convection dynamics.

\label{sec:phase-transition-numerics}
\begin{figure}[ht]
        \centering
        \includegraphics[width=0.7\textwidth]{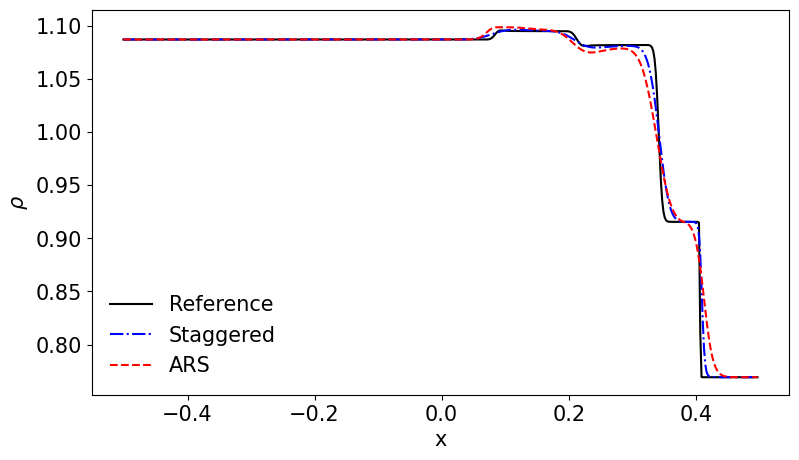}
        \includegraphics[width=0.7\textwidth]{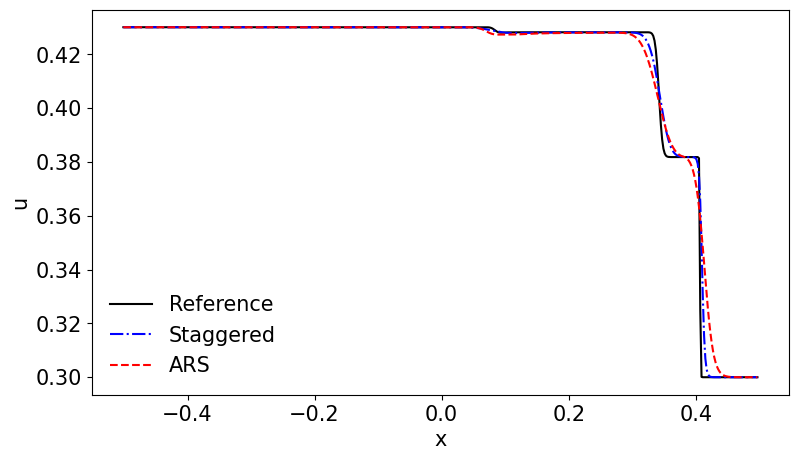}
        \includegraphics[width=0.7\textwidth]{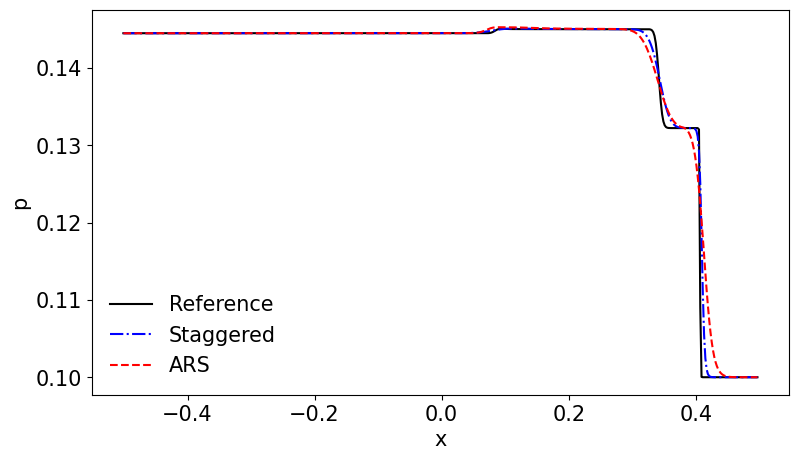}
        \includegraphics[width=0.7\textwidth]{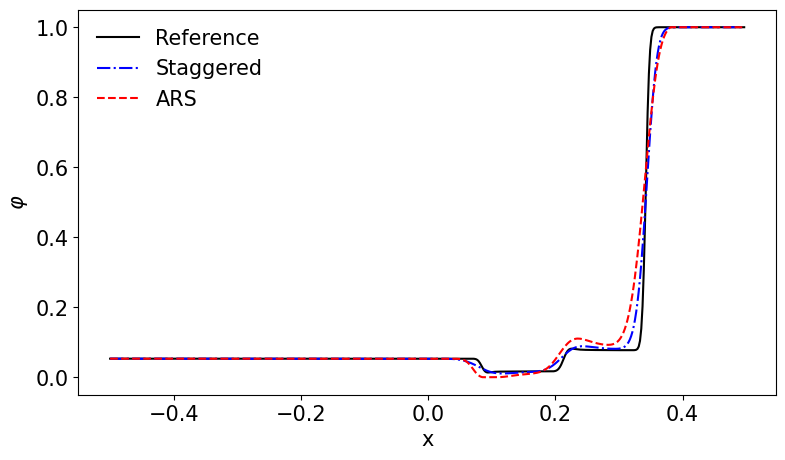}
        \caption{Profiles of density $\rho$ (top), velocity $u$
          (middle), pression $p$
          and relaxed variable $\varphi$ for the HRM model for $\varepsilon = 10^{-15}$.}
  \label{fig:HRM_eps0}
\end{figure}
\begin{figure}[ht]
        \centering
        \includegraphics[width=0.7\textwidth]{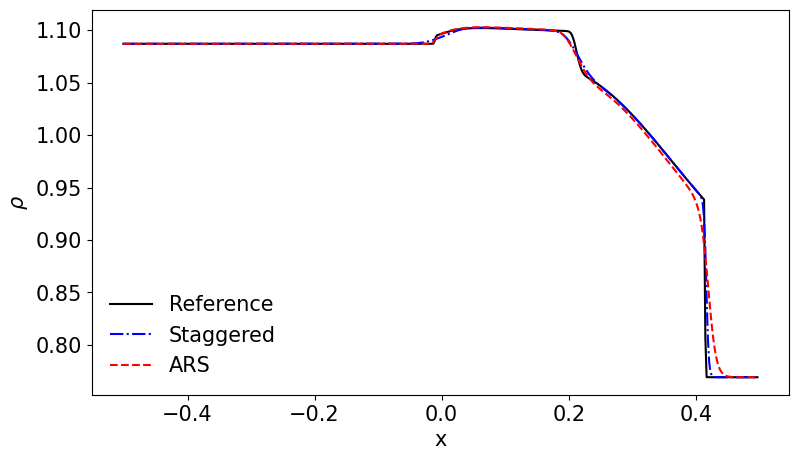}
        \includegraphics[width=0.7\textwidth]{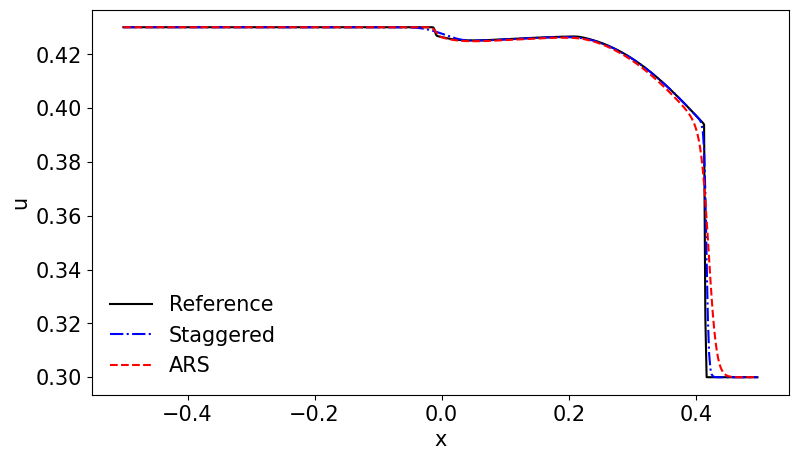}
        \includegraphics[width=0.7\textwidth]{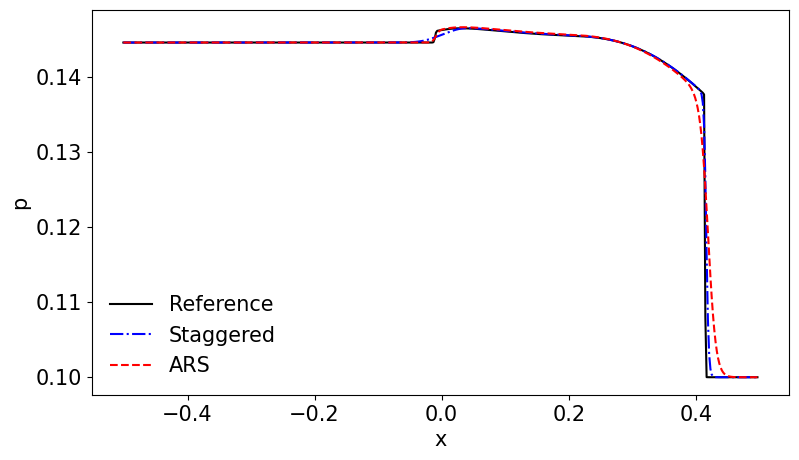}
        \includegraphics[width=0.7\textwidth]{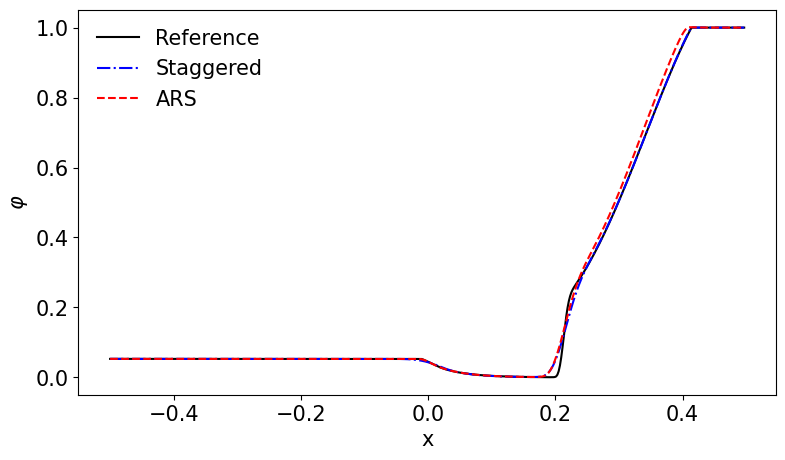}
        \caption{Profiles of density $\rho$ (top), velocity $u$
          (middle), pression $p$
          and relaxed variable $\varphi$ for the HRM model for $\varepsilon = 0.1$.}
  \label{fig:HRM_eps0.1}
\end{figure}
\begin{figure}[ht]
    \centering
    \includegraphics[width=0.7\textwidth]{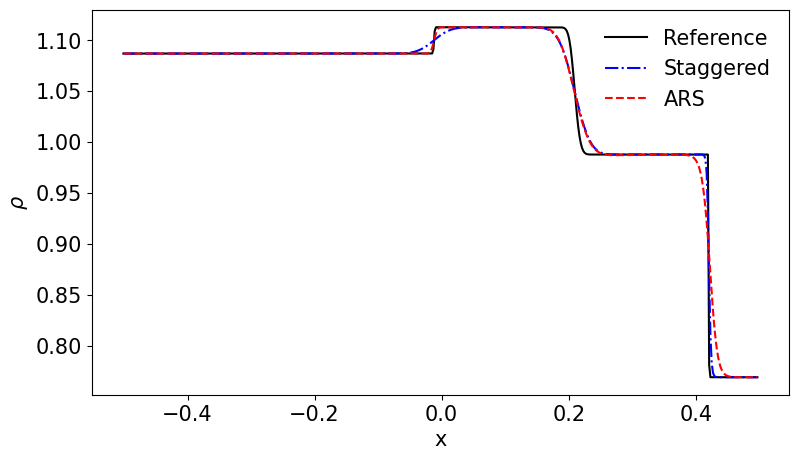}
    \includegraphics[width=0.7\textwidth]{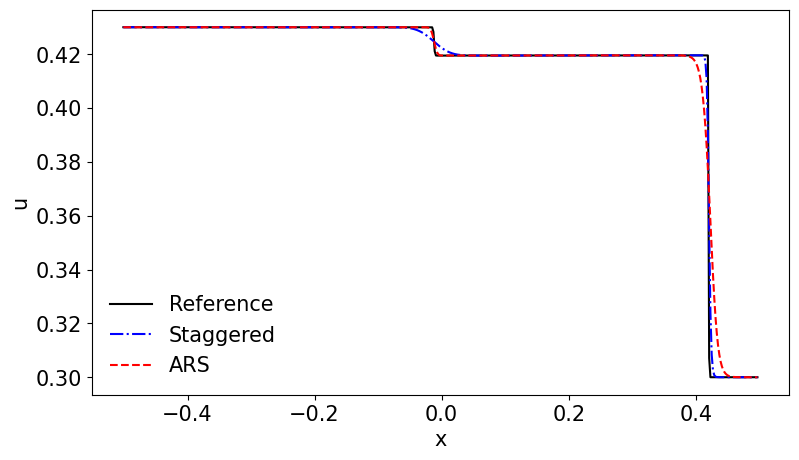}
     \includegraphics[width=0.7\textwidth]{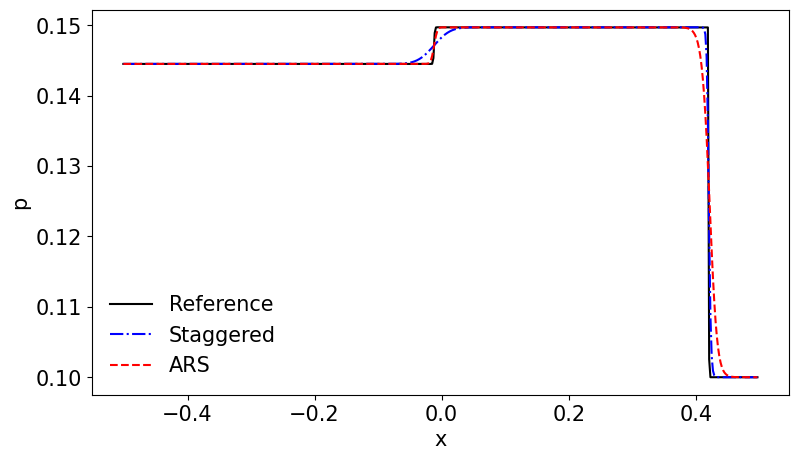}
     \includegraphics[width=0.7\textwidth]{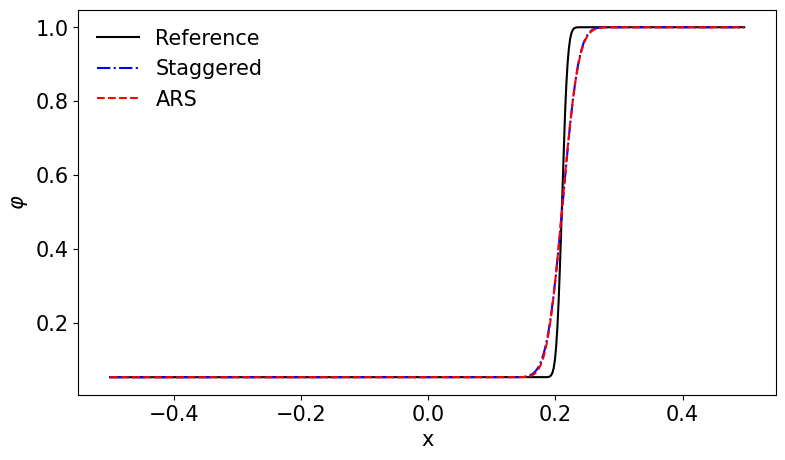}
    \caption{Profiles of density $\rho$, velocity $u$, pressure $p$ and relaxed variable $\varphi$ for the HRM model with $\varepsilon = 10^{4}$.}
    \label{fig:HRM_eps1e4}
\end{figure}

\section{Conclusion}
\label{sec:conclusion}
Two finite volume schemes have been designed for hyperbolic system of
relaxation. The main idea is to design the apporximation considering
the system as a whole, without separating the resolution of the
convective part from that of the source term.
The two schemes are asymptotic preserving in the sense that they are
consistent whatever the relaxation parameter is. In the case of the
Jin an Xin model, the preservation of invariant domains and discrete
entropy inequality are proven.
The numerical experiments illustrate the uniform performance of the schemes across stiff,
intermediate, and non-stiff regimes.

\medskip
\textbf{Acknowledgements.} This work has received the financial
support from the CNRS grant \textit{D\'efi
  Math\'ematiques France 2030}.
\bibliographystyle{plain}
\bibliography{biblio}

\end{document}